\theoremstyle{plain}
\newtheorem*{theorem*}{Theorem}
\newtheorem{theorem}{Theorem}[section]
\newtheorem{lemma}[theorem]{Lemma}
\newtheorem*{conjecture*}{Conjecture}
\theoremstyle{definition}
\newtheorem{remark}[theorem]{Remark}
\newtheorem{example}[theorem]{Example}
\DeclareMathOperator{\deff}{def}
\DeclareMathOperator{\Proj}{Proj}
\DeclareMathOperator{\Sym}{Sym}
\DeclareMathOperator{\Sing}{Sing}
\DeclareMathOperator{\id}{id}
\DeclareMathOperator{\GL}{GL}
\newcommand{\ZZ}{\mathbb{Z}}
\newcommand{\QQ}{\mathbb{Q}}
\newcommand{\RR}{\mathbb{R}}
\newcommand{\CC}{\mathbb{C}}
\newcommand{\PP}{\mathbb{P}}
\newcommand{\TT}{\mathbb{T}}
\newcommand{\EEs}{\mathcal{E}}
\newcommand{\NX}{N_{X/\mathbb{P}^{N}}}
\newcommand{\NXt}{N_{X/\mathbb{P}^{N}}(-1)}
\newcommand{\HHs}{\mathcal{H}}
\newcommand{\OOs}{\mathcal{O}}
\begin{document}

\date{}
\title{Linear recurrence sequences and the duality defect conjecture}
\author{Grayson Jorgenson}
\maketitle

\begin{abstract}
It is conjectured that the dual variety of every smooth nonlinear subvariety of dimension $> \frac{2N}{3}$ in projective $N$-space is a hypersurface, an expectation known as the duality defect conjecture. This would follow from the truth of Hartshorne's complete intersection conjecture but nevertheless remains open for the case of subvarieties of codimension $> 2$. A combinatorial approach to proving the conjecture in the codimension $2$ case was developed by Holme, and following this approach Oaland devised an algorithm for proving the conjecture in the codimension $3$ case for particular $N$. This combinatorial approach gives a potential method of proving the duality defect conjecture in many of the cases by studying the positivity of certain homogeneous integer linear recurrence sequences. We give a generalization of the algorithm of Oaland to the higher codimension cases, obtaining with this bounds the degrees of counterexamples would have to satisfy, and using the relationship with recurrence sequences we prove that the conjecture holds in the codimension $3$ case when $N$ is odd.
\end{abstract}

\section{Introduction}

This article concerns the \emph{duality defect conjecture}, which may have first been posed by Alan Landman in 1974:

\begin{conjecture*}
Let $X$ be a smooth nonlinear subvariety of $\PP^N$ with $\dim(X) > \frac{2N}{3}$. Then $X^\vee$ is a hypersurface, or equivalently $\deff(X) = 0$.
\end{conjecture*}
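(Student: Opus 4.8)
\medskip
\noindent\textbf{Approach.}

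The conjecture follows from Hartshorne's complete intersection conjecture and is already settled --- combinatorially, by Holme --- for subvarieties of codimension $\le 2$, so the realistic target is the codimension $3$ case: writing $N=n+3$, the hypothesis $\dim X>\tfrac{2N}{3}$ forces $n\ge 7$ (equivalently $N\ge 10$). The plan is to carry out the Holme--Oaland strategy in a form uniform in $N$: encode ``$\deff(X)=0$'' as the non-vanishing of a single explicit integer $\mathcal D_N(X)$ built from $\deg X$ and the Chern numbers of $X$; cut down the freedom in those Chern numbers using the twisted normal bundle $\NXt$ together with the Barth--Larsen restriction theorems and Ein's results on positive dual defect; and finally show that the arithmetic condition ``$\mathcal D_N(X)=0$'' is impossible when $N$ is odd, the obstruction being controlled by a homogeneous integer linear recurrence.

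First I would translate the geometry into arithmetic. From the conormal variety $\mathcal C_X\subset\PP^N\times(\PP^N)^\vee$ the projection formula expresses $\deff(X)$ through the polar classes of $X$; the Euler sequence and the normal sequence $0\to T_X\to T_{\PP^N}|_X\to\NX\to 0$ then rewrite these in terms of the Chern numbers $\kappa_i:=\deg\!\big(c_i(T_X)\,h^{\,n-i}\big)$, with $\kappa_0=\deg X$. The outcome is that $\deff(X)=0$ if and only if a universal $\ZZ$-linear combination $\mathcal D_N(X)=\sum_i\alpha_i\kappa_i$ is nonzero, the $\alpha_i$ depending only on $n$. Passing to $\NXt$, a rank-$3$ bundle, the identity $c_i(\NXt)=0$ for $i\ge 4$ --- combined with $c(\NXt)=c(T_{\PP^N}(-1)|_X)\cdot c(T_X(-1))^{-1}$ --- yields relations that, in codimension $3$, determine $\kappa_4,\kappa_5,\dots$ from $\kappa_0,\kappa_1,\kappa_2,\kappa_3$; substituting these turns $\mathcal D_N(X)$ into a fixed polynomial in $(\deg X,\kappa_1,\kappa_2,\kappa_3)$ whose coefficients are the values at $n$ of a homogeneous linear recurrence over $\ZZ$.

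Next I would cut the problem down and close it. In the range $n>\tfrac{2N}{3}$ the Barth--Larsen theorems force the low-degree cohomology and the Chow groups of $X$ to coincide with those of $\PP^N$, which constrains $\kappa_1,\kappa_2,\kappa_3$; the parity theorem ($\deff(X)>0\Rightarrow\deff(X)\equiv n\pmod 2$) and the accompanying bounds on $\deff(X)$ further reduce what must be shown, leaving it enough to prove that the single integer $\mathcal D_N(X)$ cannot vanish. That is now a purely arithmetic question about the sign behaviour of the recurrence sequence $(\alpha_i)$ and its companions (the coefficient sequences of $\kappa_1,\kappa_2,\kappa_3$): one computes the characteristic polynomial, whose factorisation depends on the parity of $N$, and shows that for $N$ odd it has a unique dominant root, a positive real one, with every other root of strictly smaller modulus, so that the sequences stabilise in sign over the interval of indices dictated by $n>\tfrac{2N}{3}$ --- hence $\mathcal D_N(X)\ne 0$. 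The finitely many small $N$ lying outside the asymptotic window are handled by running the generalized Oaland algorithm, which also produces the advertised explicit bounds on the degree a counterexample would have to have in the even-$N$ cases.

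The main obstacle is the sign analysis of the recurrence. One needs the sign of $\mathcal D_N(X)$ determined not merely for $n\gg 0$ but for every admissible $n$, which requires an effective separation of the dominant root from the remaining roots of the characteristic polynomial --- and the parity of $N$ enters exactly here, since for even $N$ a subdominant root of maximal modulus (something like $-\lambda$ or a primitive cube root of unity times $\lambda$) can appear and wreck sign stability, which is why the even case yields only bounds. A second, more bookkeeping-heavy difficulty is tracking the Chern-number constraints coming from smoothness and from the Barth--Larsen range tightly enough that $\mathcal D_N(X)$ is never permitted to vanish through accidental cancellation; getting this exactly right is what separates the clean odd-$N$ theorem from the partial even-$N$ statement.
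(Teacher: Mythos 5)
First, a point of logic: the statement you were asked about is the duality defect conjecture itself, which this paper does \emph{not} prove --- it remains open, and the paper only establishes partial results (the codimension $3$ case for odd $N$, computational verification for finitely many $(N,m)$, and degree bounds for counterexamples). Your proposal implicitly concedes this by retreating to ``the realistic target is the codimension $3$ case,'' so it should be judged as a sketch of the paper's Theorem 5.1 rather than of the conjecture. At that level, your reduction framework is essentially the paper's: conormal variety, delta invariants $\delta_j(X)=\deg(X)s_{n-j}$, Larsen's theorem to force $c(\NXt)=1+c_1h+\dots+c_3h^3$ with integer $c_j$, Zak and Ein's parity theorem to pin $\deff(X)=2$ when $N$ is odd, and hence a homogeneous order-$3$ integer recurrence $(s_j)$ with $s_n=s_{n-1}=0$ and $s_j>0$ for $j\le n-2$.

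The genuine gap is in the arithmetic endgame. You propose to show that for odd $N$ the characteristic polynomial ``has a unique dominant root, a positive real one, with every other root of strictly smaller modulus,'' so the sequence stabilises in sign. This cannot work as stated, for two reasons. First, the characteristic polynomial is $t^3-c_1t^2+c_2t-c_3$ with $c_2,c_3$ \emph{unknown} positive integers varying with $X$ (only $c_1$ is pinned down by Ein's theorem); its root configuration is not a function of the parity of $N$, and no uniform dominant-root separation is available. Second, the claim is simply false for some admissible $(c_1,c_2,c_3)$: the paper's Example 5.3 exhibits $(c_1,c_2,c_3)=(3,9,27)$ and $(4,8,8)$ whose sequences are periodic in sign (two consecutive zeros recurring forever), so these recurrences have several roots of equal modulus. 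The paper's actual mechanism (Lemma 5.2) is entirely different: two consecutive zeros at positions $m,m+1$ give three independent eigenvectors of the companion matrix $A^m$ for the single eigenvalue $d=c_3u_{m-1}$, hence $A^m=d\cdot\id_3$; one then shows $d^{1/m}\in\ZZ$, so $d^{-1/m}A$ is an element of finite order $m$ in $\GL(3,\QQ)$, forcing $m\in\{1,2,3,4,6\}$ and, after eliminating small cases, $m=4$ or $6$. Since $n\ge 7$ in the conjectural range, no such $X$ exists. Without this (or an equivalent effective zero-spacing result for order-$3$ recurrences), your sign-stabilisation step does not close, and your fallback of ``running the generalized Oaland algorithm for the finitely many small $N$'' has no finite list to run over.
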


Here $X^\vee$ denotes the \emph{dual variety} of $X$, the subvariety of dual projective space $\PP^{N\vee}$ consisting of all hyperplanes tangent to $X$, and $\deff(X) = N - 1 - \dim(X^\vee)$ is called the \emph{duality defect} of $X$, which provides a measure of how far $X^\vee$ is from being a hypersurface. Our aim is to give a generalization of an algorithm of Oaland \cite{oaland1} for proving the conjecture in the codimension $3$ case to the higher codimension cases of the conjecture and to prove the following two results:

\setcounter{section}{4}
\setcounter{theorem}{7}
\begin{theorem}
If $X$ is a smooth nonlinear codimension $m$ subvariety of $\PP^N$ with duality defect $r > 0$ and $N - m \geq \frac{3N-2}{4}$, then $$\deg(X)\leq \sum_{j = 0}^{m}\left(\frac{N - m - r}{2}\right)^{j}.$$
\end{theorem}

\setcounter{section}{5}
\setcounter{theorem}{0}
\begin{theorem}
The duality defect conjecture in the codimension $3$ case is true for $\PP^N$ when $N$ is odd.
\end{theorem}

Some of the motivation for this work comes from a well-known conjecture of Hartshorne \cite{hartshorne3} on complete intersections.

\begin{conjecture*}
If $X$ is a smooth subvariety of $\PP^N$ with $ \dim(X) >\frac{2N}{3}$, then $X$ is a complete intersection.
\end{conjecture*}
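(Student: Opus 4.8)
The target is Hartshorne's complete intersection conjecture, a famous open problem; what follows is accordingly a program, with the honest caveat that its decisive step is exactly where the difficulty has always lain. Write $n=\dim X$, $c=N-n$, so the hypothesis $n>\tfrac{2N}{3}$ says $2n-N>c$ (hence $n>2c$). By Barth--Larsen, $H^{i}(\PP^{N};\ZZ)\to H^{i}(X;\ZZ)$ is an isomorphism for $i\le 2n-N$; since $2n-N>c\ge 1$ this gives $\operatorname{Pic}(X)=\ZZ\langle h\rangle$, $\omega_{X}=\OOs_{X}(e)$ for one integer $e$ ($X$ subcanonical), $H^{q}(\OOs_{X})=0$ for $0<q\le 2n-N$, and no intermediate Hodge cohomology of $X$ below degree $2n-N$. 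The plan is to use the classical reduction: $X$ is a complete intersection if and only if (a) $X$ is arithmetically Cohen--Macaulay, i.e.\ $H^{i}_{*}(\mathcal{I}_{X})=0$ for $1\le i\le n$ (equivalently $X$ is projectively normal and $H^{i}(\OOs_{X}(t))=0$ for all $t$ and $0<i<n$), and (b) the normal bundle $\NX$ splits as $\bigoplus_{j=1}^{c}\OOs_{X}(d_{j})$. Granting (a) and (b) one lifts the dual of each summand to $F_{j}\in H^{0}(\mathcal{I}_{X}(d_{j}))$ (using ACM to control $H^{1}$ of $\mathcal{I}_{X}$ and $\mathcal{I}_{X}^{2}$), shows $F_{1},\dots,F_{c}$ is a regular sequence, and compares Koszul resolutions; the converse is standard.

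For (a), Barth--Larsen yields $H^{i}(\OOs_{X}(t))=0$ for $i\le 2n-N$ and all $t$, and Kodaira--Akizuki--Nakano plus Serre duality give it for $|t|\gg 0$, $0<i<n$; the open range is $2n-N<i<n$ with $t$ moderate. I would induct on $c$ by cutting with general hyperplanes, the hypothesis $n>2c$ being precisely what keeps each iterated section $X\cap H_{1}\cap\dots\cap H_{s}$ within the Barth--Larsen range for its own codimension, and combine this with sharp Evans--Griffith / Kumar--Peterson--Rao syzygy estimates to propagate the vanishing with no loss at each cut (a naive Lefschetz induction loses a degree each time, which is the first place care is needed). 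Projective normality comes along in parallel from the resulting control of $H^{1}(\mathcal{I}_{X}(t))$.

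For (b) --- the heart of the matter --- the above plus Horrocks' criterion reduces one to showing that $\NX$, equivalently the pertinent syzygy bundle of $\mathcal{I}_{X}$ pushed down a tower of general hyperplane sections to $\PP^{n+c}$, has no intermediate cohomology, i.e.\ no indecomposable summand; this is the codimension-$c$ form of the open conjecture that no indecomposable bundle of rank $\le c$ exists on $\PP^{M}$ for $M$ large. The inputs I would bring to bear are: the numerical constraints of this paper, since a non-split $\NX$ forces the Chern numbers, and hence the degree, of $X$ into bounded, rigidly structured families (Theorem~4.8 and its higher-codimension analogues) that one can hope to eliminate case by case, exactly as the recurrence-sequence analysis does in codimension $3$ for $N$ odd (Theorem~5.1); the structure theorems for Gorenstein ideals of small codimension (Buchsbaum--Eisenbud, Kustin--Miller and successors), applicable because $X$ is subcanonical and $c$ is small relative to $n$; and Zak's theorems on linear normality and tangencies, the geometric counterpart of these cohomological assertions, which constrain the secant and tangent behaviour of $X$ in a manner compatible with $n>\tfrac{2N}{3}$ only for complete intersections.

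The main obstacle is this last step, and at present it is genuinely out of reach: passing from cohomological and numerical control of $\NX$ to an actual splitting is precisely the content of the low-rank vector bundle conjectures, and a full proof of Hartshorne's conjecture must supply a new idea there, beyond the recurrence-sequence bookkeeping that already suffices for the weaker duality-defect statement. What the methods assembled above can prove is a theorem of exactly the shape ``the conjecture holds in codimension $\le 2$, and in codimension $3$ under a parity hypothesis on $N$'' --- i.e.\ as far as the numerical obstructions carry one before the indecomposable-bundle problem must be confronted directly.
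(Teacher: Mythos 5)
There is no proof to compare against: the statement you were given is Hartshorne's complete intersection conjecture, which the paper explicitly records as \emph{open} (``This conjecture still remains open today, over four decades after its declaration, even for the case of subvarieties of codimension $2$'') and uses only as motivation for the weaker duality defect conjecture. Your submission is, by your own account, a program rather than a proof, and the gap you acknowledge is fatal: step (b), the splitting of $\NX$ into line bundles, is not a technical lemma awaiting verification but is essentially equivalent to the low-rank indecomposable-bundle conjectures on $\PP^M$, which are themselves open and are generally regarded as containing the full difficulty of Hartshorne's conjecture. No argument is offered for it, so nothing is proved.

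Two further points of caution even within the program. First, step (a) is also not established: Barth--Larsen and Kodaira--Nakano leave exactly the range $2n-N<i<n$ with moderate twists open, and the proposed ``induct on hyperplane sections without losing a degree'' is a known sticking point, not a routine fix --- arithmetic Cohen--Macaulayness of small-codimension smooth subvarieties is itself an open problem in this generality. Second, the reduction ``(a) $+$ (b) $\Rightarrow$ complete intersection'' needs more than lifting sections of $\NX^\vee(d_j)$: one must show the lifted $F_j$ generate $\mathcal{I}_X$ (not merely cut out $X$ set-theoretically), which requires a degree comparison such as $\deg(X)=\prod d_j$ that your sketch does not supply. The honest conclusion of your final paragraph is correct: the methods of this paper yield only the numerical consequences for the \emph{duality defect} conjecture (Theorems 4.7, 4.8, 5.1), and cannot reach the complete intersection statement.
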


This conjecture still remains open today, over four decades after its declaration, even for the case of subvarieties of codimension $2$. Its truth would imply that of the duality defect conjecture, see for instance \cite[Proposition 3.1]{ein1} or \cite[Proposition 2.1]{holme1}. So far the duality defect conjecture has proven to be a more tractable problem; it is known to be true in the codimension $2$ case, with proofs given by Ein \cite[Theorem 3.4]{ein1} and independently by Holme and Schneider \cite[Corollary 6.4]{holme2}. A proof of the duality defect conjecture would provide evidence for Hartshorne's conjecture, and any counterexample would also serve to disprove Hartshorne's conjecture.

We are interested in a combinatorial approach to proving the duality defect conjecture that Holme developed which was used to give an alternate proof of the codimension $2$ case \cite{holme1}. This approach is promising as it may be extendable to the higher codimension cases. Oaland \cite{oaland1} used Holme's approach and results of Ein \cite[Theorem 2.3]{ein1} and Zak \cite[Corollary 7.4]{fulton2} to create an algorithm for proving the conjecture in the codimension $3$ case for individual values of $N$, and was able to use this to prove the conjecture in that case for $N = 10,11,\ldots, 141$.

Specifically, given a smooth subvariety $i:X\hookrightarrow \PP^N$ of dimension $n$ the combinatorial approach works by considering the class of the \emph{conormal variety} $Z(X) = P(\NXt ^\vee)\hookrightarrow \PP^N\times\PP^{N\vee}$ in the Chow ring $A(\PP^N\times\PP^{N\vee})$. This class may be written as $$[Z(X)] = \delta_0(X)s^N t + \ldots + \delta_n(X) s^{N-n}t^{n + 1},$$ for some $\delta_j(X)\in\ZZ$, where if $pr_1:\PP^N\times\PP^{N\vee}\rightarrow \PP^N$ and $pr_2: \PP^N\times\PP^{N\vee}\rightarrow \PP^{N\vee}$ are the projection maps, $s = pr_1^{*}([H])$ and $t = pr_2^{*}([H^\prime])$ for hyperplanes $H\subseteq \PP^N$, $H^\prime\subseteq \PP^{N\vee}$. It is shown by Holme \cite{holme1} that for each $j$, we have $\delta_j(X)\geq 0$, and, if $\delta_j(X) = 0$ for $j = 0,\ldots,r - 1$, and $\delta_{r}(X) \neq 0$ for some $r\in\{0,\ldots,n\}$, then in fact $\deff(X) = r$. Thus the duality defect of $X$ can be computed from the coefficients of the class of $Z(X)$. In particular, the dual variety of $X$ is a hypersurface if and only if $\delta_0(X) \neq 0$.

Additionally, each $\delta_j(X) = \deg(s_{n - j}(\NXt^\vee))$, where $s_{j}(\NXt^\vee)$ denotes the $j$th Segre class of the bundle $\NXt^\vee$ using the conventions of Fulton \cite{fulton1}. By applying properties of singular cohomology for complex projective varieties one may show that when $n\geq \frac{3N - 2}{4}$, for our purposes we may assume $c(\NXt) = 1 + c_1 h + \ldots + c_{N - n} h^{N - n}$ for some $c_j\in \ZZ$, where $c(\NXt)$ is the total Chern class of $\NXt$ and $h = i^*([H])$. This allows us to write $$s(\NXt^\vee) = \frac{1}{c(\NXt^\vee)} = 1 + s_1 h + \ldots + s_n h^n$$ for some $s_j\in\ZZ$, and where each $\delta_j(X) = \deg(X) s_{n-j}$. These $s_j$ form part of a \emph{homogeneous integer linear recurrence sequence} of order $N - n$:
\begin{equation}
\begin{aligned}
s_0 &= 1,\\
s_j &= \sum_{q = 1}^{j} (-1)^{q+1} c_{q}s_{j-q}, \text{ for } j = 1,\ldots,N- n - 1\\
s_j &= \sum_{q = 1}^{N - n} (-1)^{q + 1}c_{q}s_{j - q}, \text{ for } j \geq N - n.
\end{aligned}
\end{equation}

The algorithm of Oaland \cite[Kapittel 5]{oaland1} works in the codimension $3$ case by showing that if $X$ has positive defect then there are only finitely many possibilities for $c_1, c_2, c_3$, and the possible sequences of $s_j$ can be computed. One of the ideas of Holme \cite{holme1} applied in Oaland's work is to use positivity properties that the \emph{Schur polynomials} in the Chern classes of $\NXt$ satisfy to obtain inequalities involving the $c_j$. Using this idea, we find inequalities that extend the algorithm of Oaland to the higher codimension cases of the conjecture. These inequalities also follow from a recent result of Huh \cite[Theorem 21]{huh1} which specifies additional constraints the coefficients of the class of a subvariety in $A(\PP^N\times\PP^{N\vee})$ must satisfy, and are key to our proof of Theorem 4.8. By running the generalized algorithm we show that the duality defect conjecture is true in the codimension $3$ case for $N = 10,11,\ldots,201$, in the codimension $4$ case for $N = 14,15, \ldots,50$, and in the codimension $5$ case for $N = 18,19\ldots,23$.

Alternatively, if one can show there are simply no recurrence sequences of this form with $s_n = \ldots = s_{n - r + 1} = 0$ and $s_j > 0$ for $j \leq n - r$ for appropriate $r > 0$, then there can be no positive defect smooth nonlinear subvarieties of $\PP^N$ of dimension $n$. In this way, a better understanding of the positivity of homogeneous integer linear recurrence sequences can have geometric implications. This is the idea of our proof of Theorem 5.1, where we reduce the necessary work to proving the following purely number-theoretic statement.

\setcounter{theorem}{1}
\begin{lemma}
Let $(u_j)_{j\in\mathbb{Z}_{\geq0}}$ be the homogeneous integer linear recurrence sequence defined by the initial conditions $u_0 = 0, u_1 = 0, u_2 = 1$, and $u_j = c_1 u_{j - 1} - c_2 u_{j - 2} + c_3 u_{j - 3}$ for $j>2$, where $c_1,c_2,c_3\in\mathbb{Z}_{>0}$. Suppose there exists an $m\in\ZZ_{>2}$ with $u_m = u_{m+1} = 0$ and $u_{j} > 0$ for $j = 2,\ldots,m-1$. Then $m = 4$ or $6$.
\end{lemma}

This article is organized as follows. In Section 2 we introduce conventions, notations, and facts that will be used throughout. In Section 3 we define the needed concepts from projective duality and describe how the duality defect of a smooth projective subvariety may be computed from the coefficients of the class of its conormal variety. In Section 4 we highlight the connection between the duality defect conjecture and homogeneous integer linear recurrence sequences and then give a generalization of the algorithm of Oaland to the higher codimension cases of the conjecture. Here we also prove Theorem 4.8. Lastly, Section 5 is dedicated to the proof of Theorem 5.1.

\textit{Acknowledgements}
I would like to thank Paolo Aluffi for all his guidance and support, and for introducing me to the duality defect problem. I also wish to thank Mark van Hoeij for pointing out to me that Lemma 5.2 could be sharpened to its current form. This work was partially supported by NSA grant H98230-16-1-0016.

\setcounter{section}{1}
\section{Preliminaries}

Throughout we take our base field to be $\CC$. For an algebraic vector bundle $E$ of rank $e$ on a variety $X$ of dimension $n$, we define its projective bundle to be the $X$-scheme $P(E) = \Proj(\Sym_{\OOs_X}(\EEs^{\vee}))$ where $\EEs$ is the locally free sheaf of sections of $E$, following the conventions in Fulton \cite{fulton1}. Our definition is contrary to the convention used by Holme \cite{holme1} and Grothendieck, where our $P(E)$ would be denoted $\PP(\EEs^\vee)$. Here $\EEs^{\vee}$ denotes the dual of $\EEs$, that is, the sheaf $\EEs^{\vee} = \HHs om_{\OOs_X}(\EEs,\OOs_X)$. We will also denote the dual bundle of $E$ by $E^\vee$ and by $E(d)$ the bundle $E\otimes O_X(d)$.

Additionally, we use Fulton's conventions for Chern and Segre classes \cite[Chapter 3]{fulton1}. To simplify notation, we will denote the $j$th Segre class of $E$ by $s_j(E) = s_j(E)\cap [X]$ which lives in the Chow group $A_{n-j}(X)$, and by $c_j(E) = c_j(E)\cap [X] \in A_{n-j}(X)$ the $j$th Chern class of $E$. The total Chern class of $E$ will be denoted $c(E) = 1 + c_1(E) + \ldots + c_e(E)$, and the Chern and Segre polynomials in indeterminant $t$ of $E$ will be denoted by $c_t(E) = \sum_{j=0}^{\infty} c_j(E)t^j$, $s_t(E) = \sum_{j=0}^{\infty}s_j(E)t^j$, respectively. The Chern and Segre classes of $E$ are related via $$s_t(E) = \frac{1}{c_t(E)}$$ and so:

\begin{lemma}
We have $s_0(E) = 1$, and $s_j(E) = - \sum_{q = 1}^{j}c_{q}(E)s_{j-q}(E)$ for every $j \in \ZZ_{>0}$.
\end{lemma}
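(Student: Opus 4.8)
The plan is to deduce both assertions from the relation $s_t(E) = 1/c_t(E)$ recalled immediately above, that is, from the identity $c_t(E)\cdot s_t(E) = 1$ of formal power series in the indeterminate $t$ whose coefficients are the Chern and Segre class operations on $A_*(X)$. Since $c_0(E) = 1$ and $c_q(E) = 0$ for every $q$ exceeding the rank $e$ of $E$, the left-hand factor $c_t(E)$ is in fact a polynomial in $t$, so the product is well defined and may be expanded and compared coefficientwise against the right-hand side; capping everything with $[X]$ then yields the corresponding identities among the classes $s_j(E),\, c_q(E)\in A_*(X)$ in the paper's notation.

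First I would read off the coefficient of $t^0$: on the left it is $c_0(E)\,s_0(E) = s_0(E)$ and on the right it is $1$, which gives $s_0(E) = 1$. Next, fixing $j\in\ZZ_{>0}$, I would extract the coefficient of $t^j$: on the left this equals $\sum_{q=0}^{j} c_q(E)\,s_{j-q}(E)$ and on the right it equals $0$. Isolating the $q=0$ summand and using $c_0(E)=1$ rearranges this to $s_j(E) = -\sum_{q=1}^{j} c_q(E)\,s_{j-q}(E)$, which is the asserted recursion; one may of course truncate the sum at $q=\min(j,e)$, since $c_q(E)$ vanishes for $q>e$.

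There is no genuine obstacle here: the only point meriting a remark is the legitimacy of the coefficientwise comparison, which is immediate once one observes that $c_t(E)$ is a polynomial and that, in each fixed dimension, only finitely many terms of $s_t(E)$ contribute. Alternatively, one may simply invoke \cite[Chapter 3]{fulton1}, where the relation $c_t(E)\,s_t(E) = 1$ is established from the definition of the Segre classes via the projection formula on $P(E)$, after which the recursion follows by the one-line expansion above.
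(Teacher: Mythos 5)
Your proposal is correct and is exactly the argument the paper intends: the lemma is stated as an immediate consequence of the relation $s_t(E) = 1/c_t(E)$, and your coefficientwise expansion of $c_t(E)\,s_t(E) = 1$ is the standard (and the paper's implicit) derivation. Nothing is missing.
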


Suppose $\lambda = (\lambda_1,\ldots,\lambda_m)$ represents a partition of an integer $m\in\ZZ_{>0}$ into integers $e \geq \lambda_1 \geq \ldots \geq \lambda_m \geq 0$. We define the \emph{Schur polynomial} corresponding to $\lambda$ in the Chern classes of $E$ to be $$\Delta_\lambda (E) = \det(c_{\lambda_i + j - i}(E))_{1\leq i,j\leq m} = \det \begin{bmatrix}
c_{\lambda_1}(E) & c_{\lambda_1 + 1}(E) & \cdots & c_{\lambda_1 + m - 1}(E)\\
c_{\lambda_2 - 1}(E) & c_{\lambda_2}(E) & \cdots & c_{\lambda_2 + m - 2}(E)\\
\cdots & \cdots & \cdots & \cdots\\
c_{\lambda_m - m + 1}(E) & c_{\lambda_m - m + 2}(E) & \cdots & c_{\lambda_m}(E)
\end{bmatrix},$$ adopting the notation $c_j(E) = 0$ for $j < 0$. From Fulton \cite[Example 12.1.7]{fulton1} we have:

\begin{theorem}
If $E$ is globally generated, then $\Delta_\lambda(E)\in A^{\geq}_{n-m}(X)$ where $A^{\geq}_{n-m}(X)$ denotes the subset of $A_{n-m}(X)$ consisting of the classes that can be represented by nonnegative cycles.
\end{theorem}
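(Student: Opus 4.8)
The plan is to reduce the statement to the classical effectivity of Schubert classes on a Grassmannian and then to pull this effectivity back along a classifying morphism, using Kleiman's transversality theorem. We may assume $n\geq m$, since otherwise $A_{n-m}(X)=0$ and there is nothing to prove. Since $E$ is globally generated of rank $e$, there is a surjection $\OOs_X^{\oplus N}\twoheadrightarrow E$ for some $N$; replacing $\OOs_X^{\oplus N}$ by a larger trivial bundle, we may take $N\geq e+m$. This surjection is exactly the datum of a morphism $h\colon X\to G$, where $G$ is the Grassmannian parametrizing rank-$e$ quotients of $\CC^{N}$ (equivalently, $(N-e)$-dimensional subspaces of $\CC^{N}$), such that $h^{*}Q\cong E$ for the universal rank-$e$ quotient bundle $Q$ on $G$. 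Since $\lambda_1\leq e$ by hypothesis and $\lambda$ has at most $m\leq N-e$ parts, $\lambda$ fits inside the $(N-e)\times e$ box indexing the Schubert varieties of $G$; hence the Giambelli determinantal formula identifies the operational class $\Delta_\lambda(Q)$, evaluated on $[G]$, with the class $[\Omega_\lambda]$ of a Schubert variety $\Omega_\lambda\subseteq G$ cut out by a fixed flag, so $\Delta_\lambda(Q)\cap[G]$ is effective. (Concretely one may instead pass to the kernel subbundle $\ker(\OOs_X^{\oplus N}\to E)$ and the degeneracy loci of a general flag in $\CC^{N}$, which is the viewpoint of Kempf--Laksov and \cite[Ch.~12]{fulton1}; the content is the same.)

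Next we transfer effectivity to $X$. Writing $\gamma\colon X\to X\times G$ for the graph of $h$, which is a regular embedding because $G$ is smooth, and using flatness of the second projection together with the compatibility of refined Gysin homomorphisms with flat pullback and with Chern class operations \cite[Ch.~3 and 6]{fulton1}, one obtains the identity $\Delta_\lambda(E)\cap[X]=\gamma^{!}[X\times\Omega_\lambda]$ in $A_{n-m}(X)$, and the same identity with $\Omega_\lambda$ replaced by any translate $g\Omega_\lambda$ for $g\in\GL_N(\CC)$, since $g\Omega_\lambda$ is rationally equivalent to $\Omega_\lambda$ on the homogeneous space $G$. Now apply Kleiman's transversality theorem to the transitive $\GL_N(\CC)$-action on $G$: for a general $g$, the scheme $h^{-1}(g\Omega_\lambda)$ is either empty or of pure codimension $m=\codim_G\Omega_\lambda$ in $X$, so the intersection computing $\gamma^{!}[X\times g\Omega_\lambda]$ is dimensionally proper and this refined class is represented by the honest cycle $[h^{-1}(g\Omega_\lambda)]$, all of whose multiplicities are positive integers (indeed equal to $1$, by generic smoothness over $\CC$ applied to the smooth locus of $\Omega_\lambda$). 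Since $[h^{-1}(g\Omega_\lambda)]$ is a nonnegative cycle representing $\Delta_\lambda(E)$, we conclude $\Delta_\lambda(E)\in A^{\geq}_{n-m}(X)$.

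The step I expect to be the main obstacle is the last one: one must be certain that the naive cycle-theoretic preimage $[h^{-1}(g\Omega_\lambda)]$, and not some excess-intersection correction term, actually computes $\Delta_\lambda(E)\cap[X]$. This rests on the dimensional transversality supplied by Kleiman's theorem in characteristic $0$; without it the refined Gysin class can genuinely differ from the naive preimage. A subsidiary technical point is that Schubert varieties are singular in general, so Kleiman's theorem must be invoked both for $\Omega_\lambda$ itself, to control the dimension of the preimage, and for its smooth locus, to see that a general preimage is generically reduced --- although this last refinement is needed only for the cleaner statement that all multiplicities equal $1$, not for the effectivity assertion itself.
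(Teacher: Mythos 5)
Your proof is correct, and it is essentially the argument behind the paper's own treatment of this statement: the paper does not prove it but cites Fulton \cite[Example 12.1.7]{fulton1}, whose proof is exactly this reduction to the Grassmannian via the classifying map of the surjection $\OOs_X^{\oplus N}\twoheadrightarrow E$, the Giambelli identification of $\Delta_\lambda(Q)\cap[G]$ with a Schubert class, and Kleiman transversality to represent the pullback by the effective cycle $[h^{-1}(g\Omega_\lambda)]$. Your closing remarks correctly isolate the one point that needs care (proper dimension so that the refined Gysin class is the naive preimage cycle with positive multiplicities), so there is nothing to add.
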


One of the key ingredients to our approach is a topological result due to Larsen \cite{larsen1} concerning the singular cohomology of complex projective varieties.

\begin{theorem}
Let $i: X\hookrightarrow \PP^N$ be a smooth subvariety of codimension $r$. Then the maps $H^j(\PP^N,\ZZ)\rightarrow H^j(X,\ZZ)$ induced by $i$ are isomorphisms for $j \leq N - 2r$.
\end{theorem}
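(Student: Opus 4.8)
Put $n = \dim X = N - r$, so the asserted range is $j \le 2n - N$. The injectivity of $i^\ast$ there is elementary: $H^\ast(\PP^N,\ZZ) = \ZZ[\xi]/(\xi^{N+1})$ with $\xi = [H]$, and $i^\ast\xi = h$ satisfies $h^k \neq 0$ in $H^{2k}(X,\ZZ)$ for every $k \le n$, because $h^k \smile h^{n-k}$ evaluates to $\deg X > 0$ on the fundamental class (using $H^{2n}(X,\ZZ)\cong\ZZ$ by Poincar\'e duality), so $h^k$ is non-torsion; together with $H^{\mathrm{odd}}(\PP^N,\ZZ) = 0$ this gives injectivity of $i^\ast$ on all of $H^{\le 2n}(\PP^N,\ZZ)$. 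Hence the content is the \emph{surjectivity} of $i^\ast$ for $j \le 2n - N$, equivalently $H^j(\PP^N, X;\ZZ) = 0$ for $j \le 2n - N$.

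The plan is to obtain this from the homotopical statement that the pair $(\PP^N, X)$ is $(2n-N+1)$-connected, i.e. $\pi_j(\PP^N, X) = 0$ for $j \le 2n - N + 1$. Granting that, the $j = 1,2$ cases (when $2n-N\ge 1$) force $\pi_1(X) = 0$, so the relative Hurewicz theorem gives $H_j(\PP^N, X;\ZZ) = 0$ for $j \le 2n-N+1$; the universal coefficient theorem promotes this to $H^j(\PP^N, X;\ZZ) = 0$ in the same range, and then the long exact sequence of the pair delivers the isomorphisms for $j \le 2n - N$ (with injectivity one degree further, the sharp Barth--Larsen form). For $2n - N \le 0$ the statement only claims that $X$ is connected, which is clear.

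The crux is the connectivity of $(\PP^N, X)$, and here I would run an induction on $\dim X$ built from two Lefschetz-type inputs. For a general hyperplane $H$, Andreotti--Frankel Morse theory (using that $X\setminus(X\cap H)$ is affine of complex dimension $n$) shows $(X, X\cap H)$ is $(n-1)$-connected, and similarly $(\PP^N,\PP^{N-1})$ is $(2N-1)$-connected; the inductive hypothesis applied to $X\cap H\hookrightarrow\PP^{N-1}$, still of codimension $r$ and now of dimension $n-1$, gives $(\PP^{N-1}, X\cap H)$ roughly $(2n-N)$-connected. Splicing the homotopy long exact sequences of the triples $(\PP^N,\PP^{N-1},X\cap H)$ and $(\PP^N,X,X\cap H)$ then transports this connectivity to $(\PP^N, X)$, the low-dimensional base cases (where the bound is weak) being handled by hand. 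As an alternative one can replace the induction by the Fulton--Hansen connectedness theorem (if $Z$ is complete irreducible and $f\colon Z\to\PP^N\times\PP^N$ has $\dim f(Z) > N$ then $f^{-1}(\Delta)$ is connected) together with its higher-connectivity refinements due to Goresky--MacPherson, Hamm and Hansen, or invoke Barth's original cohomological argument exploiting the positivity of $N_{X/\PP^N}$, which is a quotient of $\OOs_X(1)^{\oplus(N+1)}$ via the restricted Euler sequence.

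The main obstacle is this connectivity input, and more precisely the \emph{sharp} constant: a naive splicing of the long exact sequences above produces connectivity roughly one degree short of $2n-N+1$, and recovering the last degree --- exactly where the theorem acquires its force --- needs a more careful excision/Blakers--Massey analysis, or Larsen's Morse-theoretic argument, or the full Fulton--Hansen circle of ideas. The homological reductions of the first two paragraphs are formal bookkeeping; all the geometry, and all the difficulty, lives in establishing this sharp connectivity, which is why we cite the result rather than reprove it.
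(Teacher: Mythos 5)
This is Theorem 2.3 of the paper, which is Larsen's theorem on the topology of complex projective manifolds; the paper offers no proof at all, simply quoting the statement from Larsen's 1973 paper as a black box. So there is no argument of the paper's to compare yours against, and the fair question is whether your sketch is sound and whether it actually proves anything beyond the citation. Your formal reductions are correct: the injectivity of $i^*$ in degrees $\leq 2n$ via non-torsion of $h^k$ and the vanishing of odd cohomology of $\PP^N$ is fine; the passage from $(2n-N+1)$-connectivity of the pair $(\PP^N,X)$ through relative Hurewicz and universal coefficients to the cohomology isomorphisms in degrees $\leq 2n-N$ is standard and correctly bookkept (your parenthetical ``equivalently $H^j(\PP^N,X;\ZZ)=0$ for $j\leq 2n-N$'' is off by one --- you need vanishing through degree $2n-N+1$ to get surjectivity through $2n-N$ --- but you target exactly that range in the next paragraph, so the argument as executed is consistent). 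The substance, as you say yourself, is entirely in the sharp connectivity of $(\PP^N,X)$, and your Lefschetz-plus-induction splicing admittedly lands one degree short of what is needed; the fix (Larsen's Morse-theoretic refinement, or the Fulton--Hansen/Goresky--MacPherson circle) is precisely the content of the cited theorem. So your proposal is an honest and correct reduction of the statement to the Barth--Larsen connectivity theorem rather than a self-contained proof; since the paper itself treats the result as a citation, this is an acceptable level of detail, but you should present it as such rather than as a proof.
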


We abuse notation, using the same symbols to denote both varieties in the algebraic context and their analytifications. Connecting the Chow and singular theories, there are cycle maps $A^j(X)\rightarrow H_{2n-2j}(X)$ inducing a group homomorphism ${cl}_* : A(X)\rightarrow H_*(X)$ covariant for proper morphisms, and after composition with the isomorphisms from Poincar\'e duality, a ring homomorphism ${cl}^* : A(X)\rightarrow H^*(X,\ZZ)$ contravariant for morphisms of smooth varieties. Since $\PP^N$ admits a cellular decomposition $\emptyset \subseteq \PP^1 \subseteq \PP^2 \subseteq \ldots \subseteq \PP^N$, the cycle map ${cl}_* : A(\PP^N)\rightarrow H_{*}(\PP^N)$ is an isomorphism. A reference for these facts is Fulton \cite[Chapter 19]{fulton1}.

If $\alpha\in A^j(X)$, we define the \emph{degree} of $\alpha$ to be $\deg(\alpha) = a$, with $a\in \ZZ$ such that $i_*(\alpha) = a[H]^{N - n + j}\in A(\PP^N)$, where $[H]$ is the class of a hyperplane in $\PP^N$. Similarly, for an element $\alpha\in H_{j}(X)$, we define $\deg(\alpha)\in \ZZ$ using the isomorphism $H_*(X)\cong \ZZ[s]/(s^{N+1})$. Since the cycle map ${cl}_*$ is covariant for proper morphisms we have that if $\alpha\in A_j(X)$ for some $j$, then $\deg(\alpha) = \deg({cl}_*(\alpha))$.

\section{Dual varieties and duality defect}

We denote \emph{dual projective space} of dimension $N$ by $\PP^{N\vee}$. This is projective space of dimension $N$, but with its $\CC$-points identified with the hyperplanes of $\PP^N$. In the classical language: $$\PP^{N\vee} = \{H\subseteq \PP^N\mid H \text{ is a hyperplane}\}.$$

If we choose coordinates $\PP^N (x_0:\ldots:x_N)$, then the natural identification of $\PP^N$ with $\PP^{N\vee}$ is defined so that $(a_0:\ldots: a_N)$ corresponds to the hyperplane of $\PP^N$ cut out by $a_0 x_0 + \ldots + a_N x_N \in \CC[x_0,\ldots,x_N]$. Suppose $i: X\hookrightarrow\PP^N$ is a subvariety of dimension $n$, where $i$ denotes the inclusion map, and let $(F_1,\ldots, F_r)$ be its homogeneous ideal. For $p\in X$, we define the \emph{embedded tangent space} of $X$ at $p$, denoted $\TT_{X,p}$, to be the linear subvariety of $\PP^N$ cut out by the polynomials $$\frac{\partial F_j}{\partial x_0}(p) x_0 + \ldots + \frac{\partial F_j}{\partial x_N}(p) x_N.$$ We define the \emph{conormal variety} of $X$ to be $$Z(X) = \overline{\{(p,H)\mid \TT_{X,p}\subseteq H, p\in X^\circ\}}\subseteq \PP^N\times\PP^{N\vee},$$ where $X^\circ$ denotes the smooth locus of $X$. From now on we assume $X$ is smooth, in which case taking the closure of the set in the definition is unnecessary. Let $pr_1 : \PP^N\times\PP^{N\vee}\rightarrow \PP^N$, $pr_2 : \PP^N\times\PP^{N\vee}\rightarrow \PP^{N\vee}$ be the projection maps. The map $pr_1$ induces a map $pr_1: Z(X)\rightarrow X$ the fibers of which are irreducible of dimension $N - n - 1$ and in fact identifies $Z(X)$ with a projective bundle over $X$. Specifically, the restriction of the line bundle $pr_2^*(O_{\PP^{N\vee}}(-1))$ to $Z(X)$ is a subbundle of $pr_1^*(\NXt^\vee)$ which induces an isomorphism $Z(X)\cong P(\NXt^\vee)$ compatible with the projection maps \cite[Appendix B, 5.5]{fulton1}. See also \cite[Section 1]{holme1}, though note the differences there in projective bundle conventions. As an immediate consequence of this identification we have $\dim(Z(X)) = N - 1$.

We define the \emph{dual variety} of $X$ in $\PP^{N\vee}$, denoted by $X^\vee$, to be the image of $Z(X)$ by $pr_2$. Thus $\dim(X^\vee)\leq N - 1$. The \emph{duality defect conjecture} claims that when $X$ is nonlinear and $n>\frac{2N}{3}$, this inequality is actually equality, or in other words the \emph{duality defect} of $X$, $\deff(X) = N-1 -\dim(X^\vee)$, is zero.

To study $Z(X)$ and $X^\vee$, we first give a more useful description of the embedding of $Z(X)$ into $\PP^N\times\PP^{N\vee}$ for our purposes. We will do this by showing that $\NXt$ is globally generated, and will furnish a surjective morphism $O_X^{N + 1}\rightarrow \NXt$. The following approach is that of Holme \cite{holme1}, \cite{holme3} and Oaland \cite{oaland1}, but we describe it here in slightly greater generality for later use. Suppose $G$ is a globally generated vector bundle on $X$ with surjection $O_X^{N+1}\rightarrow G$. Then this map induces a closed embedding $$P(G^\vee)\hookrightarrow P((O_X^{N+1})^\vee) = \Proj(\Sym_{\OOs_X}(\OOs_X^{N+1})) = X\times\PP^N.$$ After composition with the maps $$X\times\PP^N\hookrightarrow \PP^N\times\PP^N \cong \PP^N\times\PP^{N\vee},$$ this gives an embedding $P(G^\vee)\hookrightarrow \PP^N\times\PP^{N\vee},$ enabling us to consider the class of $P(G^\vee)$ in the Chow ring $A(\PP^N\times\PP^{N\vee})$. Let $pr_1,pr_2$ be the projection maps from $\PP^N\times\PP^{N\vee}$ like before. Note that $O_{P((O_X^{N+1})^\vee)}(1)$ is the pullback of $O_{\PP^{N\vee}}(1)$ by the composite map $X\times\PP^N\hookrightarrow \PP^N\times\PP^{N\vee}\rightarrow \PP^{N\vee}$.

There is an isomorphism $A(\PP^N\times\PP^{N\vee})\cong \ZZ[s,t]/(s^{N+1},t^{N+1})$ identifying $s,t$ with $pr_1^*([H])$, $pr_2^*([H^\prime])$ respectively, for hyperplanes $H\subseteq \PP^N$, $H^\prime\subseteq \PP^{N\vee}$. If $P(G^\vee)$ has dimension $d$, then $[P(G^\vee)]$ is a homogeneous element of $A(\PP^N\times\PP^{N\vee})$ of degree $r = 2N - d$ with respect to the natural grading by codimension, and so there exist $a_0,\ldots,a_r\in \ZZ$ such that
\begin{equation}[P(G^\vee)] = a_0 s^r + a_1 s^{r - 1}t + \ldots + a_{r-1} st^{r - 1} + a_r t^r.\end{equation}

These $a_j$ can be computed from the Segre classes of $G^\vee$. This follows from \emph{Scott's formula} \cite[pg. 61]{fulton1}:

\begin{theorem}
Let $E$ be a vector bundle on $X$, and let $F$ be a subbundle of $E$, with quotient bundle $G$. Suppose $G$ has rank $q$. There is a canonical closed embedding $P(F)\hookrightarrow P(E)$, which allows us to consider $[P(F)]$ as a class in $A(P(E))$. Then $$[P(F)] = \sum_{j = 0}^q c_1(O_{P(E)}(1))^j p^* (c_{q - j}(G))\in A(P(E)),$$ where $p: P(E)\rightarrow X$ is the structure morphism.
\end{theorem}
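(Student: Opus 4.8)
The plan is to realize $P(F)$ as the zero scheme of a natural section of a rank-$q$ bundle on $P(E)$ and then to invoke the standard relation between the class of such a zero scheme and the top Chern class. On $P(E)$ there is the tautological inclusion of line bundles $\OOs_{P(E)}(-1)\hookrightarrow p^*E$ (recall that in Fulton's convention $P(E)$ parametrizes lines in the fibers of $E$, with $\OOs_{P(E)}(-1)$ the tautological subbundle); composing with the surjection $p^*E\twoheadrightarrow p^*G$ and twisting by $\OOs_{P(E)}(1)$ produces a global section $\sigma$ of the rank-$q$ bundle $p^*G\otimes\OOs_{P(E)}(1)$. A point of $P(E)$ lying over $x\in X$ corresponds to a line $\ell\subseteq E_x$, and $\sigma$ vanishes there precisely when $\ell$ maps to $0$ in $G_x$, i.e. when $\ell\subseteq F_x$; thus the zero locus of $\sigma$ is set-theoretically exactly $P(F)$, and the canonical embedding $P(F)\hookrightarrow P(E)$ induced by the dual surjection $\Sym(\EEs^\vee)\twoheadrightarrow \Sym(\FFs^\vee)$ is the inclusion of this zero locus.

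The first point to check carefully is that $\sigma$ is a \emph{regular} section, so that its zero scheme is $P(F)$ with the correct structure. This is local on $X$: over an open set $U$ on which $E|_U\cong F|_U\oplus G|_U$, the bundle $P(E)|_U$ is $P(F|_U\oplus G|_U)$, and in the standard affine charts near $P(F)$ the $G$-component of the tautological section is a tuple of $q$ coordinate functions cutting out $P(F)|_U$ transversally. Hence $Z(\sigma)=P(F)$ is reduced of pure codimension $q=\operatorname{rank}\bigl(p^*G\otimes\OOs_{P(E)}(1)\bigr)$ in $P(E)$; equivalently $\dim P(F)=\dim X+\operatorname{rank}F-1=\dim P(E)-q$. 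With this, the localized top Chern class of the section (see \cite[Ch.~14]{fulton1}, or directly the defining property of the top Chern class of a regular section) gives
$$[P(F)]=c_q\bigl(p^*G\otimes\OOs_{P(E)}(1)\bigr)\cap[P(E)]\quad\text{in }A(P(E)).$$

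It remains to expand the top Chern class of a tensor product with a line bundle. Writing $\xi=c_1(\OOs_{P(E)}(1))$ and using the splitting principle to factor $c(G)=\prod_{i=1}^{q}(1+a_i)$, we get $c\bigl(G\otimes\OOs_{P(E)}(1)\bigr)=\prod_{i=1}^{q}(1+a_i+\xi)$, so its degree-$q$ part is $\prod_{i=1}^{q}(a_i+\xi)=\sum_{j=0}^{q}e_{q-j}(a_1,\dots,a_q)\,\xi^{j}=\sum_{j=0}^{q}c_{q-j}(G)\,\xi^{j}$. Pulling back to $P(E)$ and substituting into the displayed formula yields
$$[P(F)]=\sum_{j=0}^{q}c_1(\OOs_{P(E)}(1))^{j}\,p^*\bigl(c_{q-j}(G)\bigr),$$
as claimed. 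The part requiring the most care is the verification that $\sigma$ is genuinely regular — that $Z(\sigma)$ is reduced of codimension exactly $q$ and not merely equal to $P(F)$ as a set — which is precisely what the local splitting $E|_U\cong F|_U\oplus G|_U$ is used for; one also has to stay consistent with Fulton's projective bundle convention, since the opposite convention would interchange the roles of the sub- and quotient bundles and produce the wrong formula.
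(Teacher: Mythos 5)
Your proof is correct. The paper does not actually prove this statement — it is quoted as Scott's formula from Fulton — and your argument (realizing $P(F)$ as the zero scheme of the regular section of $p^*G\otimes\OOs_{P(E)}(1)$ induced by $\OOs_{P(E)}(-1)\hookrightarrow p^*E\twoheadrightarrow p^*G$, checking regularity via a local splitting, and then expanding the top Chern class of a tensor product with a line bundle) is the standard proof and is consistent with the Fulton projective-bundle convention used throughout the paper.
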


Applying this formula to the exact sequence $$0\rightarrow G^\vee\rightarrow (O_{X}^{N+1})^\vee\rightarrow F^\vee\rightarrow 0,$$ where $F$ is the kernel of the surjection $O_{X}^{N+1}\rightarrow G$, and arguing analogously to Holme \cite[Section 1]{holme1} establishes:

\begin{lemma}
We have $$[P(G^\vee)] = \sum_{j = 0}^q t^j p^* (s_{q - j}(G^\vee))\in A(X\times\PP^{N}),$$ where $q$ is the rank of $F$, and $t$ denotes the pullback of $t = pr_2^*([H^\prime])$ by the inclusion $X\times\PP^N\hookrightarrow \PP^N\times\PP^{N\vee}$. Furthermore, $a_j = \deg(s_{q - j}(G^\vee))$ for each $j = 0,\ldots, q$.
\end{lemma}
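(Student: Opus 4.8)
The plan is to apply Scott's formula (Theorem 3.3) to the short exact sequence of vector bundles
$$0\to G^\vee\to (O_X^{N+1})^\vee\to F^\vee\to 0,$$
obtained by dualizing the defining surjection $O_X^{N+1}\to G$ with kernel $F$; since $F$ is the kernel of a surjection of vector bundles over a smooth variety it is itself a subbundle, and dualizing a short exact sequence of vector bundles again yields one, so $G^\vee$ is a subbundle of $(O_X^{N+1})^\vee$ with locally free quotient $F^\vee$ of rank $q = \operatorname{rk}(F)$. In the notation of Theorem 3.3 I take $E = (O_X^{N+1})^\vee$, subbundle $G^\vee$, quotient $F^\vee$, so that $P(E) = P((O_X^{N+1})^\vee) = X\times\PP^N$ with structure morphism $p$. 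One should observe that the closed embedding $P(G^\vee)\hookrightarrow P((O_X^{N+1})^\vee)$ produced by Scott's construction is the same one used in Section~3: both are obtained by applying $P(-)$ to the inclusion $G^\vee\hookrightarrow (O_X^{N+1})^\vee$, i.e. both realize $P(G^\vee)$ as the family of lines of $(O_X^{N+1})^\vee$ lying in $G^\vee$. Theorem 3.3 then gives
$$[P(G^\vee)] = \sum_{j=0}^{q} c_1(O_{P(E)}(1))^{j}\, p^*\!\left(c_{q-j}(F^\vee)\right)\in A(X\times\PP^N).$$

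Next I would rewrite the two pieces appearing here. As recorded in Section~3, the relative $O(1)$ on $P((O_X^{N+1})^\vee) = X\times\PP^N$ is the pullback of $O_{\PP^{N\vee}}(1)$ along $X\times\PP^N\hookrightarrow\PP^N\times\PP^{N\vee}\to\PP^{N\vee}$, so $c_1(O_{P(E)}(1)) = t$ in the notation of the statement. For the Chern classes of $F^\vee$, the Whitney sum formula applied to the exact sequence above gives $c(G^\vee)c(F^\vee) = c((O_X^{N+1})^\vee) = 1$, hence $c_t(F^\vee) = c_t(G^\vee)^{-1} = s_t(G^\vee)$ using the relation $s_t(E) = 1/c_t(E)$, and comparing coefficients $c_{q-j}(F^\vee) = s_{q-j}(G^\vee)$ for each $j$. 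Substituting both facts into the displayed identity yields $[P(G^\vee)] = \sum_{j=0}^{q} t^{j}\, p^*(s_{q-j}(G^\vee))$, as claimed.

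For the second assertion I would push this class forward along the closed embedding $\iota: X\times\PP^N\hookrightarrow\PP^N\times\PP^N\cong\PP^N\times\PP^{N\vee}$ (which is $(i,\id)$ followed by the identification) and compare with the expansion (2). Since $t$ on $X\times\PP^N$ is $\iota^*$ of the class $t = pr_2^*([H^\prime])$, the projection formula gives $\iota_*(t^{j}\,p^*(s_{q-j}(G^\vee))) = t^{j}\,\iota_* p^*(s_{q-j}(G^\vee))$. The square with top arrow $\iota$, bottom arrow $i$, left arrow $p$, and right arrow the first projection $\pi:\PP^N\times\PP^N\to\PP^N$ is Cartesian with $\pi$ flat, so $\iota_* p^* = \pi^* i_*$; and because $s_{q-j}(G^\vee)\in A^{q-j}(X)$ the degree convention of Section~2 gives $i_*(s_{q-j}(G^\vee)) = \deg(s_{q-j}(G^\vee))\,[H]^{N-n+q-j}$. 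Therefore $\iota_*[P(G^\vee)] = \sum_{j=0}^{q}\deg(s_{q-j}(G^\vee))\, s^{N-n+q-j}t^{j}$. Finally, $\dim P(G^\vee) = n + \operatorname{rk}(G) - 1 = N+n-q$, so the codimension in (2) is $r = N-n+q$, and matching the coefficient of $s^{r-j}t^{j}$ on both sides gives $a_j = \deg(s_{q-j}(G^\vee))$ for $j = 0,\ldots,q$ (with $a_j = 0$ for $q < j\leq r$).

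I expect the only real difficulty to be bookkeeping with the conventions rather than anything substantive: correctly matching the roles of $G^\vee$ and $F^\vee$ as subbundle and quotient in Scott's formula, pinning down $c_1(O_{P(E)}(1)) = +t$ from the statement in Section~3, and tracking the codimension shift $N-n$ introduced by $i_*$ so that the exponents of $s$ align with the homogeneous expansion (2). No tools beyond Scott's formula, the Whitney sum formula, the relation $s_t(E) = 1/c_t(E)$, flat base change for the Cartesian square above, and the degree conventions of Section~2 should be needed.
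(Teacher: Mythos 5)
Your proof is correct and follows exactly the route the paper sketches: apply Scott's formula to the dualized sequence $0\to G^\vee\to (O_X^{N+1})^\vee\to F^\vee\to 0$, identify $c_1(O_{P(E)}(1))$ with $t$ and $c_{q-j}(F^\vee)$ with $s_{q-j}(G^\vee)$ via the Whitney formula, then push forward to $\PP^N\times\PP^{N\vee}$ and match coefficients against the expansion (2); the paper leaves these details to a reference to Holme, and you have supplied them correctly, including the base-change identity $\iota_*p^* = \pi^*i_*$ and the codimension bookkeeping. One trivial slip: Scott's formula is Theorem~3.1 in the paper's numbering, not Theorem~3.3.
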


The normal bundle $\NX$ is defined from the exact sequence $$0 \rightarrow T_{X}\rightarrow i^* (T_{\PP^N})\rightarrow \NX \rightarrow 0.$$ Tensoring by a line bundle preserves exactness, so we may twist to obtain $$0 \rightarrow T_{X}(-1)\rightarrow i^*(T_{\PP^N})(-1)\rightarrow \NXt \rightarrow 0.$$ Next applying the pullback functor $i^*(-)$ to the Euler sequence for $\PP^N$ \cite[II, Example 8.20.1]{hartshorne1} and then twisting yields the exact sequence $$0\rightarrow O_X(-1) \rightarrow O_X^{N+1}\rightarrow i^* (T_{\PP^N}) (-1)\rightarrow 0.$$ By identifying the term $i^*(T_{\PP^N})(-1)$ in both of these sequences we obtain the desired surjection $O_X^{N+1}\rightarrow \NXt$.

Replacing $G$ with $\NXt$ in equation (2) yields the simpler expression $$[Z(X)] = [P(\NXt^\vee)] = a_1 s^N t + \ldots + a_N s t^N.$$ When referring to the class of $Z(X)$, we adopt the notation $a_{j+1} = \delta_j(X)$, and refer to the $\delta_j(X)$ as the \emph{delta invariants} or the \emph{degrees of the polar classes} of $X$, see \cite[Section 3]{holme3}. It follows from the projection formula that in fact $\delta_{n+1}(X) s^{N-n-1}t^{n+2} + \ldots + \delta_{N-1}(X)s t^N = 0$, and so the expression for the class of the conormal variety simplifies further to $$[Z(X)] = \delta_0(X)s^N t + \ldots + \delta_n(X) s^{N-n}t^{n + 1}.$$

From Holme \cite[Theorem 1.1]{holme1}, we are able to read off the duality defect of $X$ from the $\delta_j(X)$:

\begin{theorem}
Let $r\in \{0,\ldots,n\}$. If $\delta_0(X) = \ldots = \delta_{r - 1}(X) = 0$ and $\delta_{r} (X) \neq 0$, then $X^{\vee}$ has dimension $N - 1 - r$. Thus in this case, $\deff(X) = r$.
\end{theorem}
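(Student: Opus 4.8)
The plan is to establish the equivalent statement that the smallest index for which $\delta_j(X)\neq 0$ is exactly $\deff(X)$; together with the hypothesis this gives the theorem. Write $e=\deff(X)$, so $\dim X^\vee=N-1-e$, and since $pr_2\colon Z(X)\to X^\vee$ is a surjective morphism of irreducible varieties with $\dim Z(X)=N-1$, the theorem on the dimension of fibers shows that for a general $H\in X^\vee$ the fiber $F_H=pr_2^{-1}(H)\cap Z(X)$ has dimension exactly $e$. The numerical bridge to the $\delta_j(X)$ is the identity $\delta_j(X)=\deg(s^{j}t^{N-1-j}\cdot[Z(X)])$, obtained by multiplying $[Z(X)]=\sum_{j=0}^n\delta_j(X)s^{N-j}t^{j+1}$ by $s^{j}t^{N-1-j}$ and reading off the coefficient of $s^Nt^N$ in $\ZZ[s,t]/(s^{N+1},t^{N+1})$. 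Since $s$ and $t$ are pulled back from hyperplane classes, I may represent $s^{j}$ by $pr_1^{-1}(L_1)$ for a linear subspace $L_1\subseteq\PP^N$ of codimension $j$ and $t^{N-1-j}$ by $pr_2^{-1}(L_2)$ for a linear subspace $L_2\subseteq\PP^{N\vee}$ of codimension $N-1-j$, i.e.\ of dimension $j+1$. Using Bertini on $\PP^N$ to take $X\cap L_1$ (hence $Z(X)\cap pr_1^{-1}(L_1)$) smooth and then Kleiman's transversality theorem in characteristic zero for a general translate of $L_2$ under the transitive action of $\mathrm{PGL}_{N+1}$ on $\PP^{N\vee}$, the scheme $Z(X)\cap pr_1^{-1}(L_1)\cap pr_2^{-1}(L_2)$ is, for general choices, either empty or reduced of dimension $0$; in either case $\delta_j(X)$ equals its number of points, so $\delta_j(X)\geq 0$ and $\delta_j(X)\neq 0$ precisely when this intersection is nonempty.

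With this set up I would first show $\delta_j(X)=0$ for $j<e$. There $\dim L_2=j+1\leq e$, so $\dim X^\vee+\dim L_2\leq(N-1-e)+e=N-1<N$, whence a general linear subspace of that dimension is disjoint from $X^\vee$. Then $pr_2(Z(X)\cap pr_2^{-1}(L_2))\subseteq X^\vee\cap L_2=\varnothing$ forces $Z(X)\cap pr_2^{-1}(L_2)=\varnothing$, so the triple intersection is empty and $\delta_j(X)=0$.

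For $j=e$ I would exhibit a point of the triple intersection, choosing $L_2$ before $L_1$. Fix a general $L_2$ of dimension $e+1$: then $X^\vee\cap L_2$ is a nonempty finite set whose members are general points of $X^\vee$ — the locus of such $L_2$ meeting a fixed proper closed subset of $X^\vee$ has positive codimension in the Grassmannian by the same dimension count — so for each $H\in X^\vee\cap L_2$ the fiber $F_H$ has dimension $e$. The projection $pr_1$ restricts to a bijection of $F_H$ onto the contact locus $Y_H=\{p\in X:\TT_{X,p}\subseteq H\}$, since over a point $p$ the condition $\TT_{X,p}\subseteq H$ singles out the unique pair $(p,H)$; hence $\dim Y_H=e$, and as $Y_H\subseteq X$ this records $e\leq n$, so $\delta_e(X)$ genuinely appears in $[Z(X)]$. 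Consequently $pr_1(Z(X)\cap pr_2^{-1}(L_2))=\bigcup_{H\in X^\vee\cap L_2}Y_H$ has dimension $e$, so a general linear subspace $L_1\subseteq\PP^N$ of codimension $e$, chosen after $L_2$, meets it; for such $L_1$ the triple intersection is also reduced of dimension $0$ by the argument of the first paragraph. Picking $p$ in $L_1\cap\bigcup_H Y_H$, it lies in some $Y_H$, giving $(p,H)\in Z(X)$ with $p\in L_1$ and $H\in L_2$, so the triple intersection is nonempty and $\delta_e(X)>0$. Combining the two claims, the least $r$ with $\delta_r(X)\neq 0$ is $e=\deff(X)$, which is the assertion. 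The main obstacle will be the transversality bookkeeping: one must be sure that $\delta_j(X)$ is literally a reduced point count so that ``nonzero'' upgrades to ``positive'', and that the various genericity requirements (general $L_1$, general $L_2$, $H$ general in $X^\vee$, general $\mathrm{PGL}_{N+1}$-translate, general fiber of $pr_2$) are imposed in a compatible order and hold at once.
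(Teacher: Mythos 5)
Your argument is correct, and it is essentially the proof the paper relies on: Theorem 3.3 is quoted from Holme \cite[Theorem 1.1]{holme1} without a reproduced proof, and Remark 3.4 describes exactly your mechanism --- realizing each $\delta_j(X)$ as an intersection number of $Z(X)$ with general linear subvarieties $pr_1^{-1}(L_1)\cap pr_2^{-1}(L_2)$ and analyzing the fibers of $pr_2$ over $X^\vee$. Your dimension counts for $j<\deff(X)$ and $j=\deff(X)$, and the use of Kleiman transversality to upgrade ``nonempty'' to ``positive intersection number,'' match that standard argument.
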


\begin{remark}
The argument used to prove this establishes that the $\delta_j(X)$ represent intersection numbers of projective subvarieties with linear subvarieties in general position, and thus that $\delta_j(X)\geq 0$ for every $j$. This same argument will work for any globally generated vector bundle $G$ on $X$, and shows that in the expression (2) for the class $[P(G^\vee)]$, each $a_j\geq 0$. By Lemma 3.2, this means that $\deg(s_{q-j}(G^\vee))\geq 0$ for $j = 0,\ldots,q$, where $q$ is as in Lemma 3.2.
\end{remark}

Applying Lemma 3.2 to the surjection $O^{N+1}_X\rightarrow \NXt$ gives us $\delta_j(X) = \deg(s_{n-j}(\NXt))$ for each $j$. From the exact sequence $$0\rightarrow F\rightarrow O_X^{N+1}\rightarrow \NXt\rightarrow 0,$$ where $F$ denotes the kernel of the surjection $O_X^{N+1}\rightarrow \NXt$, we obtain a surjection $$O_X^{N+1} \cong (O_X^{N+1})^{\vee}\rightarrow F^{\vee}.$$ This shows $F^{\vee}$ is globally generated, and therefore by Lemma 3.2, the $\deg(s_j(F))$ are the coefficients of the class of $P(F)$ in $A(\PP^N\times\PP^{N\vee})$. By the sum formula for Chern classes, $$c_t(F)c_t(\NXt) = c_t(O_X^{N+1}) = 1,$$ which implies $c_t(\NXt) = s_t(F)$. Thus in particular, $\deg(c_j(\NXt)) = \deg(s_j(F))$ for $j = 0,\ldots,N-n$. This in addition to Remark 3.4 gives:

\begin{lemma}
For every $j = 0,\ldots,n$, $\delta_j(X) = \deg(s_{n - j}(\NXt^\vee))\geq 0$, and for $j = 0,\ldots,N - n$, we have $\deg(c_j(\NXt))\geq 0$.
\end{lemma}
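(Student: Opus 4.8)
The plan is to obtain both assertions as formal consequences of Lemma 3.2 and the positivity recorded in Remark 3.4, applied to two globally generated bundles on $X$: the twisted normal bundle $\NXt$ itself, and the dual $F^\vee$ of the kernel $F$ of the canonical surjection $O_X^{N+1}\to\NXt$ constructed above. The equality $\delta_j(X)=\deg(s_{n-j}(\NXt^\vee))$ has essentially already been set up, so the real content is the nonnegativity, and the work amounts to feeding the correct bundles into the machinery of Lemma 3.2 and keeping track of ranks and reindexing.

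For the first assertion, the surjection $O_X^{N+1}\to\NXt$ exhibits $\NXt$ as globally generated, and since $X$ has codimension $N-n$ the bundle $\NXt$ has rank $N-n$, so its kernel $F$ is locally free of rank $q=n+1$. Applying Lemma 3.2 with $G=\NXt$ gives $[Z(X)]=[P(\NXt^\vee)]=\sum_{j=0}^{n+1}t^{j}p^{*}(s_{n+1-j}(\NXt^\vee))$ together with the identification $a_j=\deg(s_{n+1-j}(\NXt^\vee))$. Reindexing by $\delta_j(X)=a_{j+1}$ yields $\delta_j(X)=\deg(s_{n-j}(\NXt^\vee))$ for $j=0,\ldots,n$, and Remark 3.4 gives $a_j\ge 0$, hence $\delta_j(X)\ge 0$.

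For the second assertion I would dualize the defining sequence $0\to F\to O_X^{N+1}\to\NXt\to 0$. All three terms are locally free, so the dual sequence $0\to\NXt^\vee\to(O_X^{N+1})^\vee\to F^\vee\to 0$ is again exact; in particular $F^\vee$ is a quotient of $(O_X^{N+1})^\vee\cong O_X^{N+1}$ and hence globally generated. The kernel of $O_X^{N+1}\to F^\vee$ has rank $(N+1)-(n+1)=N-n$, so Lemma 3.2 applied to $G=F^\vee$ (using $(F^\vee)^\vee=F$) identifies the coefficients of $[P(F)]$ as $\deg(s_{N-n-j}(F))$, and Remark 3.4 gives $\deg(s_i(F))\ge 0$ for $i=0,\ldots,N-n$. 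Finally, the Whitney sum formula applied to $0\to F\to O_X^{N+1}\to\NXt\to 0$ gives $c_t(F)\,c_t(\NXt)=c_t(O_X^{N+1})=1$, so $c_t(\NXt)=1/c_t(F)=s_t(F)$; comparing coefficients (via Lemma 2.1) gives $c_j(\NXt)=s_j(F)$, and therefore $\deg(c_j(\NXt))=\deg(s_j(F))\ge 0$ for $j=0,\ldots,N-n$.

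The only non-formal input is the positivity in Remark 3.4, which itself rests on Holme's interpretation (via Theorem 3.3 and the cellular structure of $\PP^N\times\PP^{N\vee}$) of the coefficients $a_j$ as intersection numbers of projective varieties with linear subspaces in general position; granting that, no serious obstacle remains, and the only care required is the bookkeeping — which bundle plays the role of $G$, the rank of its kernel, and the reindexing $\delta_j(X)=a_{j+1}$ — in each application of Lemma 3.2.
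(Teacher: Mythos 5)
Your proposal is correct and follows essentially the same route as the paper: apply Lemma 3.2 and Remark 3.4 to the globally generated bundle $\NXt$ for the first assertion, then dualize the kernel sequence to see $F^\vee$ is globally generated, apply Lemma 3.2 and Remark 3.4 again, and use the Whitney sum formula to identify $c_t(\NXt)=s_t(F)$. The rank and indexing bookkeeping is also as in the paper.
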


From Holme \cite[Theorem 5.1]{holme3} in fact:

\begin{theorem}
For every $j = \deff(X),\ldots,n$, $\delta_j(X) = \deg(s_{n - j}(\NXt^\vee))> 0$.
\end{theorem}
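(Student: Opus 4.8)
The plan is to reconstruct the geometric content behind Holme's statement: interpret each $\delta_j(X)$, for $\deff(X)\le j\le n$, as the degree of a polar locus that one shows to be nonempty of the expected dimension. Write $r=\deff(X)$ and fix a general linear subspace $L\subseteq\PP^N$ of codimension $j$, with $0\le j\le n$, so that $X\cap L$ is smooth and irreducible of dimension $n-j$ by Bertini. Since $pr_1\colon Z(X)\to X$ is a $\PP^{N-n-1}$-bundle, the preimage $W_j:=pr_1^{-1}(X\cap L)$ is smooth and irreducible of dimension $N-1-j$, with $[W_j]=[Z(X)]\cdot s^j$. Intersecting further with $t^{N-1-j}$, the class of a general linear subspace of $\PP^{N\vee}$ of codimension $N-1-j$, and using $s^{N+1}=t^{N+1}=0$, one gets $\delta_j(X)=\deg\big([Z(X)]\cdot s^j t^{N-1-j}\big)=\deg\big((pr_2)_*[W_j]\cdot t^{N-1-j}\big)$. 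Hence $\delta_j(X)>0$ exactly when $pr_2|_{W_j}$ is generically finite onto its image, i.e. when $\dim pr_2(W_j)=N-1-j$, and $\delta_j(X)=0$ otherwise. (In particular this recovers from Theorem 3.3 that $\delta_j(X)=0$ for $j<r$, since then $\dim pr_2(W_j)\le\dim X^\vee=N-1-r<N-1-j$.)

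So it remains to prove $\dim pr_2(W_j)=N-1-j$ for $r\le j\le n$; since $pr_2(W_j)\subseteq X^\vee$ we automatically have $\dim pr_2(W_j)\le\dim W_j=N-1-j$, so only the lower bound is at stake. The fibre of $pr_2|_{W_j}$ over $H$ is $C_H\cap L$, where $C_H:=pr_1(pr_2^{-1}(H))\subseteq X$ is the contact locus of $H$; for general $H\in X^\vee$, $\dim C_H=\dim Z(X)-\dim X^\vee=r$. The essential input is the theorem of Ein \cite{ein1} and Zak \cite{fulton2} that for dual-defective $X$ the general contact locus is a linear subspace $\PP^r\subseteq X$ (this is vacuous if $r=0$). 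Let $\mu\colon U\to\GG(r,N)$, $H\mapsto C_H$, be the resulting morphism on the dense open $U\subseteq X^\vee$ over which $C_H$ is such a $\PP^r$. As the $C_H$ ($H\in U$) sweep out a dense subset of $X$ (indeed $pr_1$ already surjects $Z(X)$ onto $X$, and $pr_2^{-1}(U)$ is dense in $Z(X)$), and a family of $r$-planes with $d$-dimensional union has dimension at least $d-r$, we get $\dim\overline{\mu(U)}\ge n-r$. For general $L$ of codimension $j$ the Schubert variety $\sigma_L=\{\Pi\in\GG(r,N)\mid\Pi\cap L\ne\emptyset\}$ has codimension $j-r$ (it is all of $\GG(r,N)$ when $j=r$), and $pr_2(W_j)\cap U=\mu^{-1}(\sigma_L)$ because $C_H\subseteq X$. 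Since $\dim\overline{\mu(U)}\ge n-r\ge j-r=\codim\sigma_L$, Kleiman transversality applied to the $\mathrm{PGL}_{N+1}$-orbit of $\sigma_L$ in $\GG(r,N)$, together with the fibre-dimension bound for $\mu$, yields $\dim\mu^{-1}(\sigma_L)\ge\dim U-(j-r)=N-1-j$, hence $\dim pr_2(W_j)\ge N-1-j$ and therefore equality. Combined with $\delta_r(X)>0$ (Theorem 3.3) and $\delta_n(X)=\deg(s_0(\NXt^\vee))=\deg X>0$, this gives $\delta_j(X)>0$ for all $r\le j\le n$.

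The main obstacle is precisely the interaction between the center $L$ and the contact loci. A naive attempt would note that for $j>r$ a general codimension-$j$ linear subspace $L$ misses any single $r$-dimensional $C_H$, which would wrongly force $pr_2(W_j)=\emptyset$; the fix is to apply transversality not to one $C_H$ but to the entire family $\overline{\mu(U)}\subseteq\GG(r,N)$, whose dimension being at least $n-r$ is exactly what keeps the Schubert intersection nonempty up to $j=n$. The other substantive ingredient is the Ein--Zak linearity of general contact loci of a dual-defective variety, without which $\overline{\mu(U)}$ might consist of non-linear subvarieties and the Schubert-calculus estimate would fail; everything else (Bertini, Kleiman, the fibre-dimension theorem, the bundle structure of $pr_1$) is routine bookkeeping.

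Finally, I would record a shorter route that avoids the contact geometry entirely: $Z(X)$ is irreducible in $\PP^N\times\PP^{N\vee}$, so by the constraints on such multidegrees given in \cite{huh1} (notably that the sequence $(\delta_0(X),\delta_1(X),\ldots)$ has no internal zeros), and since $\delta_r(X)>0$, $\delta_n(X)=\deg X>0$, and $\delta_j(X)=0$ for $j>n$, every $\delta_j(X)$ with $r\le j\le n$ is nonzero, hence positive by Lemma 3.5. The geometric argument above, being close to Holme's original, would be the primary proof, with this argument stated as a remark.
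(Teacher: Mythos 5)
The paper itself offers no proof of this statement: Theorem 3.6 is quoted directly from Holme \cite[Theorem 5.1]{holme3}, so there is no in-paper argument to compare against, and your reconstruction of the underlying polar-class geometry is the right thing to attempt. The overall architecture --- reduce to showing $\dim pr_2(W_j)=N-1-j$, invoke the Ein--Zak linearity of the general contact locus, note that the contact $r$-planes sweep out a dense subset of $X$ so that $\dim\overline{\mu(U)}\ge n-r$, and intersect with the Schubert condition $\sigma_L$ --- is sound. The one step that is not justified as written is the nonemptiness of $\overline{\mu(U)}\cap\sigma_L$. Kleiman's theorem for a general translate gives \emph{properness} of the intersection (an upper bound on its dimension), not nonemptiness, and in a Grassmannian two subvarieties of complementary dimension can be disjoint: in $\GG(1,3)$ the lines through a general point and the lines in a general plane give $\sigma_2\cap\sigma_{1,1}=\emptyset$. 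So ``$\dim\overline{\mu(U)}\ge\codim\sigma_L$ keeps the Schubert intersection nonempty'' is false as a general principle. You already hold the ingredient needed to close this: $\bigcup_{H\in U}C_H=pr_1(pr_2^{-1}(U))$ is a dense constructible subset of $X$, hence contains a dense open $X_0$, a general $L$ of codimension $j\le n$ meets $X_0$, and any $p\in X_0\cap L$ lies on some $C_H$ with $H\in U$, exhibiting a point of $\mu^{-1}(\sigma_L)$ directly. Once nonemptiness is secured, the lower bound $\dim\mu^{-1}(\sigma_L)\ge N-1-j$ follows from the intersection-dimension inequality in the smooth variety $\GG(r,N)$ together with the fibre-dimension theorem, as you indicate. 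Two smaller points: for $r=0$ the linearity input is not vacuous --- you need the general fibre of $pr_2$ to be a single reduced point in order to define $\mu$, which is exactly the $r=0$ case of the reflexivity/linearity statement; and your citation of Theorem 3.3 for $\delta_{\deff(X)}(X)\ne 0$ requires first observing (via Lemma 3.5 and $\delta_n(X)=\deg X>0$) that the first nonvanishing index exists and must equal $\deff(X)$. Finally, your ``shorter route'' via Huh's no-internal-zeros theorem (Theorem 4.5 of the paper) is complete and correct as it stands, since $Z(X)$ is irreducible and $\delta_{\deff(X)}(X)>0$, $\delta_n(X)=\deg X>0$ bracket the range; given that the paper already invokes Huh's theorem for the log-concavity of the $c_j$, that argument could reasonably be promoted from a remark to the primary proof.
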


Lastly, for our purposes Theorem 2.3 will allow us to assume $c(\NXt) = 1 + c_1 h + \ldots + c_{N-n} h^{N-n}$ for some $c_j\in \ZZ$ and $h = i^*([H])$, provided the codimension of $X$ is sufficiently small. We use the following Lemma to extract precisely what we need from Theorem 2.3 and translate it to the algebraic context for convenience.

\begin{lemma}
Let $i:X\hookrightarrow \PP^N$ be a smooth subvariety of dimension $n \geq \frac{3N - 2}{4}$. Then for each $j = 1,\ldots,N-n$, there exists $c_j\in \ZZ$ such that for any $j_1,\ldots,j_{N-n}\in\ZZ_{\geq 0}$ and any homogeneous $\alpha\in A(X)$, $$\deg(\alpha\prod_{q = 1}^{N-n}c_q(\NX)^{j_q}) = \deg(\alpha\prod_{q = 1}^{N-n}(c_q h^{q})^{j_q}).$$ That is, for the purpose of computing the degrees of such monomials, we may assume each $c_j(\NX) = c_j h^j$. The analogous result holds for the bundle $\NXt$.
\end{lemma}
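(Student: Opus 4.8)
The plan is to prove the stronger assertion that, for a suitable choice of the integers $c_q$, one has $cl^*(c_q(\NX)) = c_q h^q$ in $H^{2q}(X,\ZZ)$ for every $q = 1,\ldots,N-n$. Granting this, the claimed identity of degrees follows formally: since $cl^*$ is a ring homomorphism, $cl^*(\alpha\prod_q c_q(\NX)^{j_q}) = cl^*(\alpha)\prod_q (c_q h^q)^{j_q} = cl^*(\alpha\prod_q (c_q h^q)^{j_q})$, and the degree of a class in $A(X)$ depends only on its image under $cl_*$, hence only on its image under $cl^*$. I would carry out the proof of the stronger assertion in two steps, according to whether $q$ lies below the top Chern class or equals it.

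For $q = 1,\ldots,N-n-1$ (an empty range when $X$ is a hypersurface) I would invoke Larsen's theorem. The hypothesis $n\ge\frac{3N-2}{4}$ is exactly equivalent to the inequality $2(N-n-1)\le 2n-N$, so every such $q$ satisfies $2q\le 2n-N$. By Theorem 2.3, applied with codimension $r = N-n$, the restriction map $H^{2q}(\PP^N,\ZZ)\to H^{2q}(X,\ZZ)$ is an isomorphism; since $H^{2q}(\PP^N,\ZZ)$ is free of rank one, generated by the $q$-th power of the hyperplane class, we obtain $H^{2q}(X,\ZZ) = \ZZ\,h^q$, and therefore $cl^*(c_q(\NX)) = c_q h^q$ for a unique $c_q\in\ZZ$.

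For the remaining index $q = N-n$, which sits just beyond the range to which Larsen's theorem applies, I would use the self-intersection formula in place of a topological argument. Since $i: X\hookrightarrow\PP^N$ is a regular embedding with normal bundle $\NX$ of rank $N-n$, Fulton's self-intersection formula \cite[Corollary 6.3]{fulton1} gives $i^*i_*\xi = c_{N-n}(\NX)\cap\xi$ for $\xi\in A(X)$. Taking $\xi = [X]$ and using that $i_*[X] = \deg(X)\,[H]^{N-n}$ in $A^{N-n}(\PP^N)$, one obtains $c_{N-n}(\NX) = \deg(X)\,h^{N-n}$ in $A^{N-n}(X)$. Hence it suffices to set $c_{N-n} := \deg(X)\in\ZZ$.

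With $cl^*(c_q(\NX)) = c_q h^q$ secured for all $q = 1,\ldots,N-n$, the computation in the first paragraph yields the identity for $\NX$. For the bundle $\NXt = \NX(-1)$ I would reduce to the case just treated via the twisting formula $c_k(\NX(-1)) = \sum_{i=0}^{k}\binom{N-n-i}{k-i}(-1)^{k-i}c_i(\NX)\,h^{k-i}$: substituting the relations $c_i(\NX) = c_i h^i$ already proved (with $c_0 = 1$) shows that each $c_k(\NXt)$, for $k = 1,\ldots,N-n$, is again an integer multiple of $h^k$, and the same argument applies verbatim. The one step that is not routine bookkeeping is the treatment of the top Chern class: $2(N-n)$ can exceed the Larsen bound $2n-N$ — this occurs exactly when $N\equiv 2$ or $3\pmod{4}$ and $n$ equals its minimal admissible value $\lceil\frac{3N-2}{4}\rceil$ — so this class cannot be controlled via Theorem 2.3, and the crux is to observe that it is nevertheless pinned down exactly, as $\deg(X)\,h^{N-n}$, by the self-intersection formula.
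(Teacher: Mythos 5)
Your proposal is correct and follows essentially the same route as the paper: the self-intersection formula pins down the top Chern class $c_{N-n}(\NX)=\deg(X)h^{N-n}$, Larsen's theorem (Theorem 2.3) combined with the compatibility of $cl^*$ with $i^*$ handles $q=1,\ldots,N-n-1$, and the degree identity then follows because degrees are computed through $cl_*$, whose kernel agrees with that of $cl^*$. The twisting formula for $\NXt$ is also exactly the paper's closing step.
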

\begin{proof}
By the self-intersection formula, in fact $c_{N-n}(\NX) = i^*(i_*([X])) = \deg(X) h^{N-n}$. So we need only address the cases of $c_{j}(\NX)$ for $j = 1,\ldots,N-n-1$. For such a $j$ we have the commutative diagram

\begin{center}
\begin{tikzcd}
A^j(\PP^N) \arrow[r, "i^*"] \arrow[d, rightarrow, "{cl}^*"] & A^{j}(X) \arrow[d, "{cl}^*"] \\
H^{2j}(\PP^N,\ZZ) \arrow[r, "i^*"] & H^{2j}(X,\ZZ)
\end{tikzcd}
\end{center}

By Theorem 2.3 the bottom map of this diagram is an isomorphism, and so too is ${cl}^*:A^j(\PP^N)\rightarrow H^{2j}(\PP^N,\ZZ)$. Thus there exists a $c_j\in\ZZ$ such that $c_j(\NX) - c_j h^j\in\ker({cl}^*)$. The desired result then follows from the observation that for homogeneous $\alpha,\beta,\gamma\in A(X)$, with $\alpha,\beta$ in the same homogeneous component, and for $q \in \ZZ_{\geq 1}$, if $\alpha - \beta \in \ker({cl}^*) = \ker({{cl}_*})$ then $\deg(\gamma \alpha^q) = \deg(\gamma \beta^q)$. That the analogous result holds for $\NXt$ is a consequence of the formula for the Chern classes of a bundle after taking a tensor product with a line bundle \cite[Example 3.2.2]{fulton1}.
\end{proof}

\section{A generalized algorithm and bounds for the degrees of counterexamples}

Here we will show how the facts presented in the preceding section connect the duality defect conjecture to the theory of linear recurrence sequences and will give a generalization of the algorithm of Oaland \cite{oaland1} to the higher codimension cases of the conjecture. We then prove Theorem 4.8, stated in the introduction, which bounds the degrees of possible counterexamples to the conjecture that satisfy the constraints of Lemma 3.7.

Suppose $i: X\hookrightarrow\PP^N$ is a smooth subvariety of dimension $n\geq \frac{3N-2}{4}$. By Lemma 3.7 for our purposes we may assume there exist $c_j\in\ZZ$ such that $c(\NXt) = 1 + c_1 h + \ldots + c_{N - n} h^{N - n}$, where $h = i^*([H])$ for any hyperplane $H\subseteq\PP^N$. Applying the formula for the Chern class of the dual of a bundle \cite[Remark 3.2.3]{fulton1} gives $c(\NXt^\vee) = 1 - c_1 h + \ldots + (-1)^{N - n} c_{N - n} h^{N - n}$. So now, by the formula of Lemma 2.1, we have $$s_j(\NXt^\vee) = \sum_{q = 1}^{j}(-1)^{q+1} c_{q} h^{q}s_{j-q}(\NXt^\vee)$$ for every $j \in \ZZ_{>0}$. Thus there exist $s_j\in \ZZ$ such that $s(\NXt^\vee) = 1 + s_1 h + \ldots + s_n h^n$. Furthermore, since the Chern classes of $\NXt^\vee$ vanish beyond its rank $N-n$, the $s_j$ form the recurrence sequence (1):

\begin{equation*}
\begin{aligned}
s_0 &= 1,\\
s_j &= \sum_{q = 1}^{j} (-1)^{q+1} c_{q}s_{j-q}, \text{ for } j = 1,\ldots,N- n - 1\\
s_j &= \sum_{q = 1}^{N - n} (-1)^{q + 1}c_{q}s_{j - q}, \text{ for } j \geq N - n.
\end{aligned}
\end{equation*}

By the projection formula, for each $j$, $\delta_j(X) = \deg(s_{n - j}(\NXt^\vee)) = \deg(X)s_{n - j}$. So in particular, the positivity of the delta invariants is the same as that of the $s_{n-j}$. From Lemma 3.5, this implies that the $s_{n-j}$ are nonnegative, and additionally by Theorem 3.3, that for $j = 0,\ldots,\deff(X) - 1$, $s_{n-j}=0$, and for $j = \deff(X),\ldots,n$, that $s_{n-j}>0$ by Theorem 3.6. Lemma 3.5 also shows that each $c_j \geq 0$.

This means every smooth subvariety $i: X\hookrightarrow\PP^N$ of dimension $n\geq \frac{3N-2}{4}$ gives rise to $c_j\in\ZZ_{\geq 0}$ such that the corresponding linear recurrence sequence (1) satisfies the above positivity constraints. If we can show that no such linear recurrence sequences exist for positive values of $\deff(X)$, then there can be no positive defect subvarieties.

Ein \cite[Theorem 2.4]{ein1} has proven the following parity result concerning duality defect:

\begin{theorem}
If $X$ is a smooth nonlinear subvariety of $\PP^N$ with positive duality defect $\deff(X)$, then $\deff(X) \equiv \dim(X) \mod 2$.
\end{theorem}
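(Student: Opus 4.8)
I would prove this by comparing two computations of the normal bundle of a general fibre of $pr_2\colon Z(X)\to X^\vee$. Write $n=\dim(X)$ and $r=\deff(X)>0$, so $\dim(X^\vee)=N-1-r$. The plan is to extract a divisibility constraint from $\operatorname{Pic}(\PP^r)\cong\ZZ$, and the single ingredient I would take from outside the excerpt is the \emph{linearity of the general contact locus} (due to Ein, and ultimately a consequence of Zak's theorem on tangencies \cite{fulton2}): for general $b\in X^\vee$ the contact locus
\[
L_b \;=\; \{\,x\in X \mid \TT_{X,x}\subseteq H_b\,\} \;=\; pr_1\bigl(pr_2^{-1}(b)\bigr)
\]
is a linear subspace of $\PP^N$ isomorphic to $\PP^r$, where $H_b$ denotes the hyperplane of $\PP^N$ corresponding to $b$.

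First I would pin down the general fibre $F_b:=pr_2^{-1}(b)$. Since $Z(X)=P(\NXt^\vee)$ is smooth (a projective bundle over the smooth $X$), generic smoothness in characteristic zero ensures that for general $b$ the map $pr_2$ is smooth over an open neighbourhood of $b$; in particular $X^\vee$ is smooth at $b$, the fibre $F_b$ is smooth of dimension $r$, and $N_{F_b/Z(X)}$, being the pullback of the vector space $T_bX^\vee$, is trivial of rank $N-1-r$. Moreover $pr_1$ sends $F_b$ bijectively onto $L_b$ — a hyperplane containing $\TT_{X,x}$ determines a single conormal direction at $x$, i.e.\ a single point of $Z(X)$ over $x$ — and a bijective morphism of smooth varieties in characteristic zero is an isomorphism, so $pr_1$ restricts to $F_b\cong L_b\cong\PP^r$. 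In particular $F_b$ is a section of the projective subbundle $Z_{L_b}:=pr_1^{-1}(L_b)=P(\NXt^\vee|_{L_b})\to L_b$ of $Z(X)$.

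Next I would compute $N_{F_b/Z(X)}$ through the flag $F_b\subseteq Z_{L_b}\subseteq Z(X)$. Since $pr_1$ is smooth and $L_b\subseteq X$ is smooth, $N_{Z_{L_b}/Z(X)}|_{F_b}$ is the pullback of $N_{L_b/X}$ along $pr_1|_{F_b}$, hence $\cong N_{L_b/X}$ under $F_b\cong L_b$. On the other hand, the sub-line bundle of $\NXt^\vee|_{L_b}$ cut out by the section $F_b$ is $\LLs:=pr_2^{*}\OOs_{\PP^{N\vee}}(-1)|_{F_b}$, the restriction to $F_b$ of the tautological sub-line bundle of $Z(X)=P(\NXt^\vee)$ identified in Section~3; as $pr_2|_{F_b}$ is constant, $\LLs\cong\OOs_{\PP^r}$, and $N_{F_b/Z_{L_b}}$ is the relative tangent bundle of $Z_{L_b}\to L_b$ along the section, namely $\LLs^\vee\otimes(\NXt^\vee|_{L_b}/\LLs)\cong\NXt^\vee|_{L_b}/\OOs_{\PP^r}$. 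Comparing first Chern classes in
\[
0\longrightarrow \NXt^\vee|_{L_b}/\OOs_{\PP^r} \longrightarrow N_{F_b/Z(X)} \longrightarrow N_{L_b/X}\longrightarrow 0,
\]
whose middle term has vanishing first Chern class, gives $c_1(N_{L_b/X})=c_1(\NXt|_{L_b})$. I would finish by expanding the right side via the normal and Euler sequences of $X\subseteq\PP^N$ and adjunction for $L_b\cong\PP^r\subseteq X$: with $h=i^{*}[H]$ restricting to the hyperplane class $\ell$ on $L_b$, one has $c_1(\NXt)=(n+1)h-c_1(T_X)$ and $c_1(T_X|_{L_b})=(r+1)\ell+c_1(N_{L_b/X})$, hence $c_1(\NXt|_{L_b})=(n-r)\ell-c_1(N_{L_b/X})$. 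Combining the two identities yields $2\,c_1(N_{L_b/X})=(n-r)\,\ell$ in $\operatorname{Pic}(\PP^r)\cong\ZZ\,\ell$, so $n-r$ is even, i.e.\ $\deff(X)\equiv\dim(X)\pmod{2}$.

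The step I expect to be the real obstacle is the linearity of $L_b$: everything else is vector-bundle bookkeeping, but the computation is empty unless $L_b$ is genuinely a projective space, so that $\operatorname{Pic}(L_b)$ is infinite cyclic and its canonical class is pinned down — without linearity one only gets $2\,c_1(N_{L_b/X})=(n+1)h|_{L_b}+K_{L_b}$, which carries no parity information. For a self-contained account I would deduce linearity from Zak's theorem on tangencies, which bounds the dimension of the locus along which a fixed linear space is tangent to $X$; otherwise I would simply cite it from \cite{ein1}. The remaining technical points — that $F_b\cong L_b$ as schemes and not merely set-theoretically, and the triviality of $\LLs$ and of $N_{F_b/Z(X)}$ — all reduce to taking $b$ general and working in characteristic zero.
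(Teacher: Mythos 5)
Your argument is correct, and it is worth noting how it differs from what the paper does. The paper does not prove this parity statement independently: it quotes Ein's stronger result (Theorem 4.2 in the paper, \cite[Theorem 2.3]{ein1}), which asserts both that the general contact locus $L$ is a linear $\PP^{\deff(X)}$ and that $N_{L/X}\big|_T\cong O_{\PP^1}^{(n-\deff(X))/2}\oplus O_{\PP^1}^{(n-\deff(X))/2}(1)$, and the parity is then read off from the integrality of the exponent $\frac{n-\deff(X)}{2}$. You instead import only the weaker linearity statement and recover the parity yourself, by computing $c_1(N_{L_b/X})$ in two ways: once through the flag $F_b\subseteq pr_1^{-1}(L_b)\subseteq Z(X)$ using the triviality of the normal bundle of a general fibre of $pr_2$ (which forces $c_1(N_{L_b/X})=c_1(\NXt|_{L_b})$), and once through the normal bundle and Euler sequences (which give $c_1(\NXt|_{L_b})=(n-r)\ell-c_1(N_{L_b/X})$), so that $2c_1(N_{L_b/X})=(n-r)\ell$ in $\operatorname{Pic}(\PP^r)\cong\ZZ$. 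This is essentially a reconstruction of the first-Chern-class portion of Ein's own proof, so it buys a more self-contained derivation from a strictly weaker input, at the cost of some bookkeeping the paper avoids by citation; the bookkeeping itself checks out (the identification of $N_{F_b/Z_{L_b}}$ with $\NXt^\vee|_{L_b}/\OOs_{\PP^r}$ via the tautological subbundle, the triviality of $pr_2^*\OOs_{\PP^{N\vee}}(-1)|_{F_b}$, and the two Chern class identities are all right). The only points needing the routine care you already flag are that generic smoothness should be applied over the smooth locus of $X^\vee$, and that both your route and the paper's ultimately rest on the same unproved-here external input, the linearity of the general contact locus.
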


For a general point $x\in X$ and a general tangent hyperplane $H$ in $\PP^N$ of $X$ at $x$, we define the \emph{contact locus} of $H$ with $X$ to be the singular locus of the intersection of $X$ and $H$, $(X\cap H)_{\Sing}$. The above result is an immediate corollary to Ein's more general result \cite[Theorem 2.3]{ein1}:

\begin{theorem}
Suppose $X$ is a smooth nonlinear subvariety of $\PP^N$ of dimension $n$ with positive duality defect. Then if $x$ and $H$ are as above, the contact locus $L = (X\cap H)_{\Sing}$ is a linear subvariety of dimension $\deff(X)$. Let $T$ be a line in $L$, which we identify with $\PP^1$. Then
$$N_{L/X}\big|_T\cong O_{\PP^1}^{\frac{n - \deff(X)}{2}}\oplus O_{\PP^1}^{\frac{n - \deff(X)}{2}}(1).$$
\end{theorem}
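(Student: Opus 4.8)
The plan is to reduce the theorem to a short computation of normal bundles on $\PP^1$, fed by two structural facts about the contact locus. Write $k:=\deff(X)$. For general $H\in X^\vee$ we may assume $H$ is a smooth point of $X^\vee$ and, by generic smoothness applied to the surjective morphism $pr_2\colon Z(X)\to X^\vee$, that $pr_2^{-1}(H)\subseteq Z(X)$ is smooth of dimension $N-1-\dim X^\vee=k$; since $X$ is smooth, $pr_1$ maps $pr_2^{-1}(H)$ isomorphically onto $\{x\in X:\TT_{X,x}\subseteq H\}=(X\cap H)_{\Sing}=L$, so $L$ is smooth of dimension $k$. To get that $L$ is \emph{linear} I would invoke the biduality theorem $Z(X)=Z(X^\vee)$ under the canonical identification $\PP^N\times\PP^{N\vee}\cong\PP^{N\vee}\times\PP^N$: transporting $pr_2^{-1}(H)$ through this identification, $L$ becomes the fiber over $H$ of the bundle projection $Z(X^\vee)\to X^\vee$, i.e. the set $\{x\in\PP^N:\TT_{X^\vee,H}\subseteq\widehat x\}$, where $\widehat x\subseteq\PP^{N\vee}$ is the hyperplane corresponding to $x$; the points $x$ whose hyperplane contains the fixed linear space $\TT_{X^\vee,H}$ of dimension $\dim X^\vee$ form a linear subspace of $(\PP^{N\vee})^\vee=\PP^N$ of dimension $N-1-\dim X^\vee=k$. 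Hence $L\cong\PP^k$, so $N_{L/\PP^N}\cong\OOs_L(1)^{\oplus(N-k)}$, and for a line $T\cong\PP^1\subseteq L$ we get $N_{L/\PP^N}|_T\cong\OOs_{\PP^1}(1)^{\oplus(N-k)}$.

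Next I would produce a self-duality for $N_{L/X}$. Let $f\in H^0(X,\OOs_X(1))$ be the linear form cutting out $X\cap H$. Because $f$ and its differential vanish along $L$, the second-order term of $f$ along $L$ is a well-defined $\OOs_L(1)$-valued quadratic form $\beta$ on $N_{L/X}$, equivalently a symmetric bundle map $\beta\colon N_{L/X}\to N_{L/X}^\vee\otimes\OOs_L(1)$, obtained by contracting the second fundamental form of $X$ along $L$ against the conormal direction determined by $H$. Smoothness of $L$ of dimension $k$ at a general point shows that the kernel of $\beta$ there is exactly the tangent space to $L$, so $\beta$ has rank $n-k$ there; the crucial input, discussed below, is that for general $H$ the form $\beta$ is nondegenerate at \emph{every} point of $L$, so $\beta$ is an isomorphism $N_{L/X}\xrightarrow{\ \sim\ }N_{L/X}^\vee\otimes\OOs_L(1)$. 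Restricting to $T$ gives $N_{L/X}|_T\cong\bigl(N_{L/X}|_T\bigr)^\vee\otimes\OOs_{\PP^1}(1)$.

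The theorem then follows from a short argument on $\PP^1$, and this also locates the only real obstacle. Write $N_{L/X}|_T\cong\bigoplus_{i=1}^{n-k}\OOs_{\PP^1}(a_i)$. Restricting the exact sequence $0\to N_{L/X}\to N_{L/\PP^N}\to\NX|_L\to 0$ to $T$ exhibits $N_{L/X}|_T$ as a subbundle of $\OOs_{\PP^1}(1)^{\oplus(N-k)}$, which forces $a_i\le 1$ for all $i$; the isomorphism of the previous paragraph gives the equality of multisets $\{a_i\}=\{1-a_i\}$, whence $1-a_i\ge 0$ and so $a_i\ge 0$ for all $i$; therefore each $a_i\in\{0,1\}$, and since the involution $a\mapsto 1-a$ interchanges $0$ and $1$ these values occur with the same multiplicity $(n-k)/2$ (which, as a byproduct, reproves the parity $n\equiv\deff(X)\bmod 2$), giving $N_{L/X}|_T\cong\OOs_{\PP^1}^{(n-k)/2}\oplus\OOs_{\PP^1}(1)^{(n-k)/2}$. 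The whole difficulty is the claim used above that $\beta$ is nondegenerate \emph{everywhere} on $L$ — equivalently, that for general $H$ the hyperplane section $X\cap H$ has only ordinary quadratic singularities along the smooth locus $L$; note that mere generic nondegeneracy would not help, since a general line in $L\cong\PP^k$ must meet any nonempty degeneracy divisor. This is precisely where $\deff(X)>0$ and the nonlinearity of $X$ enter — nonlinearity keeps the second fundamental form, and hence $X^\vee$, from degenerating — and it is established by a careful generic-smoothness analysis of the conormal variety $Z(X)$; once it is granted, the steps above are formal.
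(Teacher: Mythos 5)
This theorem is not proved in the paper at all: it is Ein's Theorem 2.3, quoted with a citation to \cite{ein1}. So the comparison can only be with Ein's argument, and your skeleton is in fact Ein's: linearity of the contact locus via reflexivity ($Z(X)=Z(X^\vee)$, so $L$ is the fiber of $Z(X^\vee)\to X^\vee$ over a general smooth point $H$, namely the linear space of points $x$ with $\TT_{X^\vee,H}\subseteq\widehat{x}$, of dimension $N-1-\dim X^\vee=\deff(X)$); a self-duality $N_{L/X}\cong N_{L/X}^\vee(1)$; and the clean splitting computation on $T\cong\PP^1$ using $a_i\le 1$ from $N_{L/X}|_T\hookrightarrow N_{L/\PP^N}|_T\cong\OOs_{\PP^1}(1)^{N-\deff(X)}$ together with the multiset identity $\{a_i\}=\{1-a_i\}$. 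That last computation is correct and, as you note, re-derives the parity statement.

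The genuine gap is exactly where you flag it: the everywhere-nondegeneracy of the Hessian form $\beta$ on $N_{L/X}$ (equivalently, that $X\cap H$ has only ordinary quadratic singularities transverse to $L$, at \emph{every} point of $L$). You correctly observe that generic nondegeneracy would not suffice, since a line in $L$ meets any nonempty degeneracy divisor, but you then assert the global statement is "established by a careful generic-smoothness analysis" without carrying it out — and this is the entire content of the theorem beyond formalities. The standard way to close this gap (Ein's Theorem 2.2) sidesteps pointwise analysis of the Hessian: for general $H$, generic smoothness of $pr_2\colon Z(X)\to X^\vee$ makes $N_{L/Z(X)}\cong\OOs_L^{\oplus\dim X^\vee}$ trivial, and comparing this with the exact sequence $0\to T_{Z(X)/X}\big|_L\to N_{L/Z(X)}\to N_{L/X}\to 0$ coming from viewing $L$ as a section of the projective bundle $pr_1^{-1}(L)\to L$ (where $\OOs_{Z(X)}(-1)|_L$ is trivial because $pr_2(L)$ is the single point $H$) yields the isomorphism $N_{L/X}\cong N_{L/X}^\vee(1)$ directly. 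As written, your proof defers its central claim to an unspecified argument, so it is incomplete; with the bundle-theoretic derivation of the self-duality substituted for the Hessian claim, the rest goes through as you describe.
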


Since $L$ has dimension $\deff(X)$, we also have $N_{L/\PP^N}\big|_T \cong O_{\PP^1}^{N - \deff(X)}(1).$ These normal bundles fit into the following exact sequence, see \cite[Appendix B, 7.4]{fulton1}: $$0\rightarrow N_{L/X}\big|_T\rightarrow N_{L/\PP^N}\big|_T\rightarrow \NX\big|_T\rightarrow 0.$$

It then follows from the sum formula for Chern classes that
\begin{equation}
\deg(c_{1}(\NX\big|_T)) = \frac{2N - n - \deff(X)}{2}.
\end{equation}

We also rely on a result of Zak \cite[Corollary 7.4]{fulton2}:

\begin{theorem}
If $X$ is a smooth nonlinear subvariety of $\PP^N$, then $\dim(X^\vee)\geq \dim(X)$.
\end{theorem}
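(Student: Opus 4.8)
The plan is to reduce the statement to Zak's theorem on tangencies, whose full proof I do not reproduce. First I would reduce to the case that $X$ is linearly nondegenerate. If the linear span $\langle X\rangle=\PP^M$ with $M<N$, then $X$ is smooth, nonlinear, and nondegenerate in $\PP^M$, while a hyperplane $H\subseteq\PP^N$ is tangent to $X$ if and only if either $H\supseteq\PP^M$ or $H\cap\PP^M$ is a hyperplane of $\PP^M$ tangent to $X$. Thus $X^\vee\subseteq\PP^{N\vee}$ is the cone, with vertex the linear $\PP^{N-M-1}$ of hyperplanes through $\PP^M$, over the dual variety of $X$ computed inside $\PP^{M\vee}$; its dimension therefore exceeds that of the latter by $N-M$, so the nondegenerate case implies the general one.

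So assume $X\subseteq\PP^N$ nondegenerate of dimension $n$ and put $r=\deff(X)$; if $r=0$ then $\dim X^\vee=N-1\geq n$, so suppose $r>0$. By Theorem 4.2 the contact locus $L=(X\cap H)_{\Sing}$ of a general tangent hyperplane $H$ is a linear $\PP^{r}$ contained in $X$, and since $\dim X^\vee=N-1-r$ by the definition of the defect, the inequality $\dim X^\vee\geq n$ is precisely the bound $r\leq N-n-1=\codim(X)-1$ on the dimension of this linear contact locus. I would establish that bound by applying Zak's theorem on tangencies to the pair $(X,H)$, together with his finiteness theorem for the Gauss map of a smooth nondegenerate complex variety; this is in essence Zak's own argument, and it carries the real weight of the statement.

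The main obstacle is exactly the sharpness of $r\leq\codim(X)-1$, as the elementary inputs available here fall one step short. A crude application of the tangency theorem to $H$ gives only $r=\dim L\leq\dim H-\codim(X)=n-1$, hence merely $\dim X^\vee\geq\codim(X)$, which is weaker than $\dim X^\vee\geq\dim X$ as soon as $n>N/2$---the range in which the duality defect conjecture lives. Likewise, restricting to a line $T\subseteq L$ the normal bundle sequence $0\to N_{L/X}\big|_T\to N_{L/\PP^N}\big|_T\to \NX\big|_T\to 0$ and substituting the splitting $N_{L/X}\big|_T\cong O_{\PP^1}^{(n-r)/2}\oplus O_{\PP^1}^{(n-r)/2}(1)$ of Theorem 4.2 presents $\NX\big|_T$ as a rank-$(N-n)$ quotient of $O_{\PP^1}(1)^{N-r}$ of total degree $(2N-n-r)/2$; since each of its summands then has degree at least $1$, this forces only $(2N-n-r)/2\geq N-n$, i.e.\ $r\leq n$ (and, with the parity of Theorem 4.1 and nonlinearity, $r\leq n-2$). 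Closing the remaining gap to $r\leq\codim(X)-1$ is precisely what requires Zak's deeper analysis of the Gauss map, so in the paper we quote his result.
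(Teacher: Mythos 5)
The paper offers no proof of this statement at all: it is Zak's theorem, imported verbatim from \cite[Corollary 7.4]{fulton2}, which is exactly where your proposal also terminates, so your treatment matches the paper's (your preliminary reduction to the nondegenerate case is a correct and reasonable addition, since the tangency theorem requires nondegeneracy). One correction to your discussion of why the elementary inputs supposedly fall short: Zak's theorem on tangencies bounds the dimension of the locus along which a linear space $L$ is tangent to $X$ by $\dim L - \dim X$, not by $\dim L - \codim(X)$; applied to a general tangent hyperplane $H$ it gives $\deff(X) = \dim\bigl((X\cap H)_{\Sing}\bigr) \leq (N-1) - n = \codim(X) - 1$ directly, which is precisely the sharp bound $\dim(X^\vee)\geq n$, so there is no remaining gap to close and no separate appeal to the finiteness of the Gauss map is needed (that finiteness is itself just the case $L = \TT_{X,p}$ of the same tangency theorem).
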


Suppose again that $i:X\hookrightarrow \PP^N$ is a smooth subvariety of dimension $n\geq \frac{3N-2}{4}$, and let $m = N-n$. By Theorem 4.3, $\dim(X^\vee)\geq n$, therefore $0 \leq \deff(X) \leq m - 1$. Suppose further that $\deff(X) > 0$, then by Theorem 4.1 $\deff(X) \equiv n \mod 2$, and from (3) and \cite[Example 3.2.2]{fulton1} we have $$c_1 = \frac{N - m - \deff(X)}{2}.$$

Now assume there exist integers $B_{c_1,j}$ depending only on $c_1$ for $j = 2,\ldots, m$ such that each $c_j\leq B_{c_1,j}$. We then obtain a brute-force algorithm which can be used to prove the duality defect conjecture in the case of codimension $m$ subvarieties of $\PP^N$. This is a generalized version of the algorithm given by Oaland \cite[Kapittel 5]{oaland1} for the codimension $3$ case of the conjecture.\\

\noindent\textbf{Algorithm 1.}
\noindent
Input: integers $m\in\ZZ_{\geq 3}$, $N\in\ZZ_{\geq 10}$ such that $N-m \geq \frac{3N - 2}{4}$\\
\noindent
Output: \emph{True} or \emph{False}
\begin{itemize}
    \item for each $r = 1,\ldots,m-1$ with $r \equiv N - m \mod 2$:
    \begin{itemize}
        \item set $c_1 = \frac{N - m - r}{2}$
        \item for each tuple $(c_2,\ldots,c_m)\in\ZZ_{\geq 0}^{m-1}$ such that each $c_j\leq B_{c_1,j}$:
            \begin{itemize}
                \item if $s_j(c_1,\ldots,c_m) > 0$ for $j = 0,\ldots,n-r$ and $s_j(c_1,\ldots,c_m) = 0$ for $j = n-r + 1,\ldots,n$:
                \begin{itemize}
                    \item return \emph{False}
                \end{itemize}
            \end{itemize}
    \end{itemize}
    \item return \emph{True}
\end{itemize}

Here the algorithm returns \emph{True} when there are no problematic \emph{Chern numbers} $c_j$ satisfying the requisite positivity conditions for a positive defect subvariety of codimension $m$ in $\PP^N$, and thus the conjecture is true for that case. However, if problematic Chern numbers are found, we have not made any guarantee that they must come from a positive defect subvariety, and so the truth of the conjecture in that case falls outside the scope of this algorithm and the test is inconclusive.

To apply this algorithm, we need to find bounds $B_{c_1,j}$. Oaland \cite{oaland1} provides the bound $B_{c_1,2} = c_1^2$ by using the \emph{numerical nonnegativity} of the Schur polynomials in the Chern classes of $\NXt$, though this may also be obtained from the fact that $s_2 = c_1^2 - c_2$. The idea to use Schur polynomials to obtain further bounds appears to be implicit in Oaland's work \cite[4.3 Kodimensjon st\o rre enn 3]{oaland1}; however, no other bounds are provided and instead the possibilities for $c_3$ given a choice of $c_1,c_2$ in the codimension $3$ algorithm presented there are found as the nonnegative integer roots of $s_n(c_1,c_2,c_3)\in\ZZ[c_3]$. By continuing with this idea to use the nonnegativity of the Schur polynomials we are able to find bounds $B_{c_1,j}$ for every $j\geq 2$.

\begin{lemma}
Suppose $i:X\hookrightarrow \PP^N$ is a smooth subvariety of dimension $n\geq \frac{3N - 2}{4}$ with $N-n \geq 2$. Suppose the $c_j$ are as before. Then we have $c_j \leq c_1^{j}$ for every $j\geq 2$. That is, we may take $B_{c_1,j} = c_1^{j}$ for every $j\geq 2$.
\end{lemma}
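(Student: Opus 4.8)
The plan is to extract every inequality from the nonnegativity of the Schur polynomials attached to the hook partitions $(k,1)$, and then chain them together by a one-line induction.

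First I would record two elementary facts. Since $X$ is a nonempty subvariety of $\PP^N$ we have $\deg(X)\ge 1>0$, and by the projection formula $\deg(h^k)=\deg(X)$ for every $0\le k\le n$; moreover, by hypothesis (via Lemma 3.7) degrees of monomials in the $c_q(\NXt)$ may be computed as if $c_q(\NXt)=c_q h^q$, where we set $c_0=1$ and $c_q=0$ for $q>N-n$. Also, every $c_j\ge 0$ by Lemma 3.5, so in particular $c_1\ge 0$. For $j>N-n$ the inequality $c_j\le c_1^{j}$ is then immediate ($c_j=0$, $c_1\ge 0$), so it suffices to treat $2\le j\le N-n$; since $n\ge\frac{3N-2}{4}$ we have $N-n\le n$, so all the Chow classes and degrees appearing below are defined.

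Next I would apply Theorem 2.2 to the globally generated bundle $\NXt$ with the partition $\lambda=(k,1)$, for each $k$ with $1\le k\le N-n-1$; this is admissible since the largest part $k$ does not exceed the rank $N-n$ of $\NXt$. Expanding the defining $2\times2$ determinant and using $c_0(\NXt)=1$ gives
\[
\Delta_{(k,1)}(\NXt)=c_1(\NXt)\,c_k(\NXt)-c_{k+1}(\NXt)\in A_{n-k-1}(X),
\]
and Theorem 2.2 forces this class to be represented by a nonnegative cycle, hence to have nonnegative degree. Computing that degree with $c_q(\NXt)=c_q h^q$ and $\deg(h^{k+1})=\deg(X)$ turns this into $\deg(X)\,(c_1 c_k-c_{k+1})\ge 0$, and dividing by $\deg(X)>0$ yields
\[
c_{k+1}\le c_1\,c_k\qquad\text{for }k=1,\ldots,N-n-1.
\]
An induction on $j$ then finishes the proof: for $j=2$ the case $k=1$ gives $c_2\le c_1\,c_1=c_1^{2}$, and for $j>2$ the case $k=j-1$ gives $c_j\le c_1 c_{j-1}$, which combined with the inductive hypothesis $c_{j-1}\le c_1^{j-1}$ and $c_1\ge 0$ gives $c_j\le c_1^{j}$.

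I do not anticipate a genuine difficulty; the only points needing care are bookkeeping. One must check that the hook partition $(k,1)$ is admissible for Theorem 2.2 — its largest part must be at most the rank $N-n$ of $\NXt$, which is precisely why the argument produces the inequalities only for $k\le N-n-1$, exactly the range of $k$ that is needed (and why the case $j>N-n$ must be disposed of separately). One must also keep straight the passage between the genuine Chern classes $c_q(\NXt)\in A(X)$ and the integers $c_q$, which rests on Lemma 3.7 together with $\deg(X)>0$, and one must invoke $c_1\ge 0$ so that multiplying an inequality by a power of $c_1$ preserves its direction.
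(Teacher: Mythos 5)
Your proof is correct and follows essentially the same route as the paper: nonnegativity of the hook Schur polynomials $\Delta_{(j-1,1)}(\NXt)$ via Theorem 2.2 and Lemma 3.7 yields $c_1c_{j-1}-c_j\geq 0$, and induction finishes. The extra bookkeeping you supply (admissibility of the partition, dividing by $\deg(X)>0$, $c_1\geq 0$, and the trivial range $j>N-n$) is all consistent with what the paper leaves implicit.
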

\begin{proof}
Let $2\leq j\leq N-n$. If $\lambda = (j-1,1,0,\ldots,0)$ represents the partition $\lambda_1 = j-1 \geq \lambda_2 = 1 \geq \lambda_3 = 0 \geq \ldots \geq \lambda_j = 0$ of $j$, then by Theorem 2.2 and Lemma 3.7 we have $$\Delta_\lambda(\NXt) = c_1(\NXt)c_{j-1}(\NXt) - c_{j}(\NXt) = (c_1c_{j-1} - c_j)h^j \in A^{\geq}_{n - j}(X).$$ Therefore $\deg(\Delta_\lambda(\NXt))\geq 0$ and so $c_1c_{j-1} - c_j\geq 0$. Thus by induction on $j$, $c_j \leq c_1^j$ for every $j = 2,\ldots,N-n$.
\end{proof}

Interestingly, these inequalities also follow from a recent result of Huh \cite[Theorem 21]{huh1} which imposes restrictions on the coefficients of the class of a subvariety of a product of projective spaces. One case of Huh's result is the following:

\begin{theorem}
If $Y$ is a subvariety of $\PP^N\times\PP^N$ and we write $[Y] = \sum_{j}a_j [\PP^{\dim(Y) - j}\times\PP^{j}]\in A(\PP^N\times\PP^N)$ for some $a_j\in \ZZ$, then the $a_j$ are nonnegative and form a log-concave sequence with no internal zeros. Here \emph{log-concavity} refers to the property that $a_j^2 \geq a_{j - 1}a_{j+1}$ for each $j$.
\end{theorem}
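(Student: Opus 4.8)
The plan is to read off the coefficients $a_j$ as intersection numbers, get nonnegativity for free, deduce the ``no internal zeros'' property from a dimension count on the two projections of $Y$, and prove log-concavity by cutting $Y$ down to a surface and applying the Hodge index theorem. Identify $A(\PP^N\times\PP^N)\cong\ZZ[s,t]/(s^{N+1},t^{N+1})$ with $s,t$ the pullbacks of the hyperplane classes under the two projections, and set $d=\dim(Y)$; then $[\PP^{d-j}\times\PP^j]=s^{N-d+j}t^{N-j}$, so $a_j=\deg\bigl([Y]\cdot s^{d-j}t^j\bigr)$, and $s^{d-j}t^j$ is the class of a product $L\times M$ of general linear subspaces of $\PP^N$ of codimensions $d-j$ and $j$. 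By Kleiman's transversality theorem applied to the transitive action of $\GL_{N+1}\times\GL_{N+1}$ on $\PP^N\times\PP^N$, for a general translate the intersection $Y\cap(L\times M)$ is transverse, hence a finite reduced set of points, so $a_j=\#\bigl(Y\cap(L\times M)\bigr)\geq 0$; equivalently $a_j=\deg\bigl(D_1^{d-j}D_2^j\cap[Y]\bigr)$ with $D_1=\pi_1^*\OOs_{\PP^N}(1)$, $D_2=\pi_2^*\OOs_{\PP^N}(1)$ for the projections $\pi_i\colon Y\to\PP^N$. Here $Y$ is taken irreducible, as it must be: $(\PP^2\times\{p\})\sqcup(\{p\}\times\PP^2)$ has coefficient sequence $1,0,1$.

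For the absence of internal zeros, I would identify $\{j:a_j\neq 0\}$ with the integer interval $[d-d_1,\,d_2]$, where $d_i=\dim\pi_i(Y)$. Using $a_j=\#(Y\cap(L\times M))$: for general $L$ of codimension $d-j$ the slice $Y\cap(L\times\PP^N)$ is empty unless $d-j\leq d_1$, and otherwise is pure of dimension $j$; restricting $\pi_2$ to it and invoking the generic fiber-dimension theorem (irreducibility of $Y$ keeps all images irreducible and the counts clean) gives $\dim\pi_2\bigl(Y\cap(L\times\PP^N)\bigr)=\min(j,d_2)$, so a further general cut by $M$ of codimension $j$ is nonempty iff $j\leq d_2$. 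The interval is nonempty since $Y\hookrightarrow\pi_1(Y)\times\pi_2(Y)$ forces $d\leq d_1+d_2$. Consequently $a_j^2\geq a_{j-1}a_{j+1}$ is automatic whenever one of the three terms falls outside this interval, so it remains to prove it for $1\leq j\leq d-1$.

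For such $j$, choose $d-j-1$ general members of $|D_1|$ and $j-1$ general members of $|D_2|$ and let $S$ be their common intersection with $Y$, a surface. With $A=D_1|_S$ and $B=D_2|_S$, both nef as pullbacks of ample classes, the projection formula yields $a_{j-1}=A^2$, $a_j=A\cdot B$, $a_{j+1}=B^2$ as intersection numbers on $S$. Replacing $S$ by a smooth projective surface $\widetilde S$ dominating it (normalization, then resolution of singularities) leaves these numbers unchanged, since $A,B$ pull back and degree is compatible with the projection formula, so it remains to prove $(A\cdot B)^2\geq(A^2)(B^2)$ on the smooth surface $\widetilde S$. This is the Hodge index theorem: if $A^2>0$ the intersection form is negative definite on $A^\perp$, hence $\bigl(B-\tfrac{A\cdot B}{A^2}A\bigr)^2\leq 0$, which rearranges to the inequality; and if $A^2=0$ --- it cannot be negative, $A$ being nef --- the right-hand side vanishes while $(A\cdot B)^2\geq 0$.

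The Hodge-theoretic core is short, so I expect the genuine work to be the reduction to a smooth surface: one must ensure the general complete-intersection slice $S$ is irreducible of the expected dimension --- Bertini irreducibility can fail when the relevant image of $Y$ is a curve, and these degenerate configurations together with the cases $d\leq 2$ must be handled separately, which the interval description makes easy since the sequence then has at most one interior term --- and one must verify that passing to a smooth model of the possibly singular surface $S$ preserves the three intersection numbers, which is precisely where the nefness of $D_1,D_2$ and the projection formula are used.
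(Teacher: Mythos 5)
The paper does not prove this statement at all: it is quoted verbatim as Huh's result \cite[Theorem 21]{huh1}, so there is no internal proof to compare against. Your argument is, in outline, the standard (and essentially Huh's) proof: read the $a_j$ off as intersection numbers of the two nef pullback classes, identify the support with the interval $[d-d_1,d_2]$, and get log-concavity from the Khovanskii--Teissier inequality, proved by slicing down to a surface and invoking the Hodge index theorem. The coefficient extraction, the nonnegativity via Kleiman, the computation $a_{j-1}=A^2$, $a_j=A\cdot B$, $a_{j+1}=B^2$ on the slice, and the Hodge-index step (including the $A^2=0$ case) are all correct, and you are right both that irreducibility of $Y$ is essential and that the reduction to a smooth irreducible surface is where the technical work lives.

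The one genuine soft spot is the ``no internal zeros'' step. The inclusion $\{j:a_j\neq0\}\subseteq[d-d_1,d_2]$ is easy, but the reverse inclusion is exactly the classical positivity theorem for the bidegrees of an irreducible subvariety of $\PP^N\times\PP^N$ (going back to van der Waerden), and your justification --- ``the generic fiber-dimension theorem gives $\dim\pi_2\bigl(Y\cap(L\times\PP^N)\bigr)=\min(j,d_2)$'' --- does not deliver it. The fiber-dimension theorem only says $\dim Z=\dim\pi_2(Z)+\dim(\text{generic fiber of }\pi_2|_Z)$; to conclude $\dim\pi_2(Z)=j$ you must rule out that $\pi_2$ contracts the general slice $Z=Y\cap\pi_1^{-1}(L)$, i.e.\ that its generic fiber is positive-dimensional, and nothing in the sentence does that. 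A correct route is an induction on $d$ alternating general $D_1$- and $D_2$-cuts, using Bertini irreducibility at each stage (which is available precisely because the relevant image dimensions stay $\geq2$ in the range $d-d_1\leq j\leq d_2$); note that this is also what rescues your surface reduction, since once $j-1,j,j+1$ all lie in the support one has $d_1\geq d-j+1$ and $d_2\geq j+1$, so every intermediate slice has both images of dimension $\geq2$ and Bertini keeps it irreducible --- the ``degenerate configurations'' you propose to handle separately in fact do not occur for the nontrivial instances of $a_j^2\geq a_{j-1}a_{j+1}$. With that step repaired the proposal is a complete and correct proof, independent of the citation the paper relies on.
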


If $F$ denotes the kernel of the surjection $O_X^{N+1}\rightarrow \NXt$, the Chern classes of $\NXt$ are the Segre classes of $F$, and since $F^\vee$ is globally generated, together with Lemma 3.2 this shows that the $$c_j\deg(X) = \deg(c_j(\NXt)) = \deg(s_j(F))$$ form a log-concave sequence with no internal zeros, and thus so too do the $c_j$. In particular, $c_j^2 \geq c_{j-1}c_{j+1}$ for every $j = 1,\ldots,N-n-1$. The inequalities $c_j\leq c_1^j$ then follow by an induction argument.

\begin{remark}
If $X$ is a smooth subvariety of $\PP^N$ for arbitrary $N$, then by Holme \cite[Theorem 4.2]{holme3} we have that $\delta_j(X) = \delta_{j+1}(X^\prime)$ for each $j$, where $X^\prime$ is any general hyperplane section of $X$, treated as a subvariety of $\PP^{N-1}$. So if $X$ has duality defect greater than $1$, $X^\prime$ will also be a positive defect subvariety. By Theorem 4.1, if $X$ has positive duality defect, and if $\dim(X)$ is even, then so too is $\deff(X)$, and thus $\deff(X)\geq 2$. This means that to apply Algorithm 1 to a particular codimension, it suffices to check only half the possible $N$. For example, in the codimension $3$ case one only needs to apply the algorithm successfully to an even $N$ in order to prove the conjecture for $N+1$ also.
\end{remark}

Oaland \cite[6.2 Udata for kodimensjon 3]{oaland1} has run Algorithm 1 for codimension $3$ subvarieties in $\PP^N$ for $N = 10,12,\ldots,140$, which by Remark 4.6 proves the conjecture for $N = 10,11,\ldots,141$. We have reproduced Oaland's results by running Algorithm 1 using the \emph{SageMath} computer algebra system \cite{sage1}, and have computed slightly further from $N = 142,144,\ldots,200$. We have also run the algorithm in the codimension $4$ and $5$ cases for $N = 14,15,17,19,\ldots,47,49$ and $N = 18,20,22$ respectively. The cases $N = 13$, and $N = 16,17$ are still of interest to the duality defect conjecture in these codimensions, but our current methods restrict us to $N$ satisfying the constraint of Lemma 3.7. Altogether we have the following.

\begin{theorem}
The duality defect conjecture is true for $\PP^N$ in:
\begin{enumerate}
\item the codimension $3$ case when $N = 10,11,\ldots,201$,
\item the codimension $4$ case when $N = 14,15,\ldots,50,$
\item and in the codimension $5$ case when $N = 18,19,\ldots,23$.
\end{enumerate}
\end{theorem}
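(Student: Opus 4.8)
The plan is to obtain each of the three items from the soundness of Algorithm 1 together with the reduction recorded in Remark 4.6; so the argument has three ingredients: that a \emph{True} output genuinely precludes counterexamples, that Algorithm 1 terminates, and that it has in fact been run with output \emph{True} for enough values of $N$.

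First I would check that if Algorithm 1 returns \emph{True} on input $(m,N)$ with $N-m\geq\frac{3N-2}{4}$, then the duality defect conjecture holds for smooth nonlinear codimension $m$ subvarieties of $\PP^N$. Suppose not, and let $X\subseteq\PP^N$ be such a subvariety with $r:=\deff(X)>0$; put $n=N-m$. By Theorems 4.3 and 4.1 we have $r\in\{1,\dots,m-1\}$ and $r\equiv N-m\mod 2$. By Lemma 3.7 we may write $c(\NXt)=1+c_1h+\dots+c_mh^m$ with $c_j\in\ZZ$, and the discussion preceding Algorithm 1 — using equation (3) together with \cite[Example 3.2.2]{fulton1} — gives $c_1=\frac{N-m-r}{2}$, while Lemma 3.5 and Lemma 4.4 give $0\leq c_j\leq c_1^{j}$ for $j=2,\dots,m$. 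Finally the sequence $(s_j)$ determined by $c_1,\dots,c_m$ via (1) satisfies $\deg(X)\,s_{n-j}=\delta_j(X)$, so by Lemma 3.5, Theorem 3.3 and Theorem 3.6 we get $s_j>0$ for $0\leq j\leq n-r$ and $s_j=0$ for $n-r<j\leq n$. Hence $(c_1,\dots,c_m)$ is among the tuples examined by the inner loop and meets the tested positivity and vanishing conditions, so Algorithm 1 must have returned \emph{False}, a contradiction.

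Second I would note that the search is finite: $r$ ranges over a finite set, $c_1$ is then determined, and each $c_j$ ranges over $\{0,1,\dots,c_1^{j}\}$, so only finitely many tuples are examined, and for each one the values $s_0,\dots,s_n$ are computed from the recurrence (1) in finitely many steps. One then runs Algorithm 1 — for instance in SageMath \cite{sage1} — and verifies that the output is \emph{True} for $m=3$ and every even $N$ with $10\leq N\leq 200$, for $m=4$ and $N=14$ together with every odd $N$ with $15\leq N\leq 49$, and for $m=5$ and $N\in\{18,20,22\}$; by the first step this proves the conjecture for those particular pairs $(m,N)$. Third, Remark 4.6 upgrades each success to its neighbour: a positive-defect smooth nonlinear codimension $m$ subvariety of $\PP^{N+1}$ of even dimension $N+1-m$ has defect $\geq 2$, so a general hyperplane section (using $\delta_j(X)=\delta_{j+1}(X')$) is a positive-defect codimension $m$ subvariety of $\PP^N$ of dimension $N-m$, still within the scope of the conjecture for the ranges in question; thus a \emph{True} output on $\PP^N$ also rules out counterexamples on $\PP^{N+1}$ whenever $N\equiv m-1\mod 2$. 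Applying this to the runs above supplies the missing parities — the odd $N$ with $11\leq N\leq 201$ for $m=3$, the even $N$ with $16\leq N\leq 50$ for $m=4$, and the odd $N$ with $19\leq N\leq 23$ for $m=5$ — and together with those runs this is exactly the three claimed ranges.

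The real content is the first step: one must assemble Theorems 3.3, 3.6, 4.1 and 4.3 with Lemmas 3.5, 3.7 and 4.4 so that every hypothetical counterexample is forced into the explicitly bounded search region with precisely the sign pattern the algorithm tests, so that nothing can escape the search. After that, the remaining obstacles are purely practical — the computation must genuinely terminate (it does, by the second step) and be carried out correctly, and the parity bookkeeping of Remark 4.6 must respect the constraint $N\geq 4m-2$ coming from Lemma 3.7, which is why $\PP^{14}$ is checked directly in the codimension $4$ case rather than being obtained from $\PP^{13}$.
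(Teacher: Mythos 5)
Your proposal is correct and follows the paper's own route exactly: soundness of Algorithm 1 via Lemmas 3.5, 3.7, 4.4 and Theorems 3.3, 3.6, 4.1, 4.3, a finite SageMath search over precisely the stated sets of $N$ (even $N\in[10,200]$ for $m=3$; $N=14$ and odd $N\in[15,49]$ for $m=4$; $N=18,20,22$ for $m=5$), and the parity extension of Remark 4.6 to fill in the remaining values. Your observation about why $\PP^{14}$ must be checked directly in the codimension $4$ case matches the constraint $N\geq 4m-2$ from Lemma 3.7 that the paper also notes.
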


Using the inequalities for the $c_j$, we are able to obtain bounds for the degrees of positive defect subvarieties satisfying the constraints of Lemma 3.7 in terms of the duality defect and ambient space dimension:

\begin{theorem}
If $X$ is a smooth nonlinear codimension $m$ subvariety of $\PP^N$ with duality defect $r > 0$ and $N - m \geq \frac{3N-2}{4}$, then $$\deg(X)\leq \sum_{j = 0}^{m}\left(\frac{N - m - r}{2}\right)^{j}.$$
\end{theorem}
\begin{proof}
Suppose we are given such an $X$, and let the $c_j$ be as before. By the self-intersection formula, $c_m(\NX) = \deg(X)h^{m}$. From another application of \cite[Example 3.2.2]{fulton1}, as $\NX \cong \NXt\otimes O_{X}(1)$, we obtain $c_m(\NX) = (1 + c_1 + \ldots + c_m)h^m$. Therefore $\deg(X) = 1 + c_1 + \ldots + c_m$. Since $X$ has duality defect $r>0$, $c_1 = \frac{N - m - r}{2}$, and so by applying the bounds $B_{c_1,j}$ we obtain the desired inequality.
\end{proof}

Results giving sufficient conditions for a smooth projective variety to be a complete intersection in terms of bounds on the degree of the variety have been established previously, such as that of Bertram et al. \cite[Corollary 3]{bertram1} and Holme and Schneider \cite[Theorem 5.1]{holme2}. In particular, the result of Bertram et al. shows that any smooth subvariety $X$ of $\PP^N$ of codimension $m$ with $\deg(X)\leq \frac{N}{2m}$ is a complete intersection. A stronger result of this form, in combination with Theorem 4.8, could potentially give a proof of the duality defect conjecture within the constraints of Lemma 3.7.

\section{Duality defect in codimension $3$}

Using an observation about the positivity of a particular family of homogeneous order three integer linear recurrence sequences, we prove the duality defect conjecture for the codimension 3 case when the dimension of the projective ambient space $\PP^N$ is odd. This restriction to the case of odd ambient space dimension ensures that the possibilities for counterexamples that we must rule out are subvarieties with duality defect $2$, a fact which makes the corresponding number-theoretic question more tractable.

\begin{theorem}
The duality defect conjecture in the codimension $3$ case is true for $\PP^N$ when $N$ is odd.
\end{theorem}

Let $i: X\hookrightarrow \PP^N$ be a smooth nonlinear positive defect subvariety of codimension $3$, where $N\geq 10$ is odd. By Theorems 4.1, 4.3, we see that $\deff(X) = 2$. Let $n = N - 3$. By Larsen's theorem, see Lemma 3.7, we can assume $c(\NXt) = 1 + c_1 h + c_2 h^2 + c_3 h^3$ and $s(\NXt^\vee) = 1 + s_1 h + s_2 h^2 + \ldots + s_n h^n$ where $h = i^*([H])$ for a hyperplane $H\subseteq \PP^N$, for some $s_j, c_q\in\mathbb{Z}_{\geq 0}$. Here the $s_j$ form part of the recurrence sequence $s_0 = 1, s_1 = c_1, s_2 = c_1^2 - c_2$, $$s_j = c_1 s_{j - 1} - c_2 s_{j - 2} + c_3 s_{j - 3}$$ for $j > 2$. Furthermore, $\delta_j(X) = \deg(X)s_{n - j}$ for each $j$.

Since $\deff(X) = 2$, $\delta_0(X) = \delta_1(X) = 0$, and $\delta_j (X) > 0$ for $j > 1$. Thus we see $s_n, s_{n - 1} = 0$, and $s_j > 0$ for $j = 0,\ldots,n - 2$. From these properties it follows that $c_1,c_2,c_3 > 0$ as well.

To prove our theorem, we investigate the positivity of this sequence. Note we get the same later terms if we start with initial conditions $u_{0} = 0, u_{1} = 0, u_2 = 1$ instead. That is, it suffices to study the sequence $$u_j = c_1 u_{j - 1} - c_2 u_{j - 2} + c_3 u_{j - 3}$$ with $u_{0} = 0, u_{1} = 0, u_2 = 1$, since we have $u_{j+2} = s_j$ for each $j$. The \emph{characteristic polynomial} of this sequence is $\rho = t^3 - c_1 t^2 + c_2 t - c_3$.

From the results of Section 4, we know the codimension $3$ duality defect conjecture is true for $N = 10,11,\ldots,201$. Therefore, if we could conclude via a purely number-theoretic result that the only possibilities for a recurrence sequence with the properties of $(u_j)_{j\in\ZZ_{\geq 0}}$ are those with $n\leq 198$, we would have proved our theorem. In fact, a much more precise result of this type is true.

\begin{lemma}
Let $(u_j)_{j\in\mathbb{Z}_{\geq0}}$ be the homogeneous integer linear recurrence sequence defined by the initial conditions $u_0 = 0, u_1 = 0, u_2 = 1$, and $u_j = c_1 u_{j - 1} - c_2 u_{j - 2} + c_3 u_{j - 3}$ for $j>2$, where $c_1,c_2,c_3\in\mathbb{Z}_{>0}$. Suppose there exists an $m\in\ZZ_{>2}$ with $u_m = u_{m+1} = 0$ and $u_{j} > 0$ for $j = 2,\ldots,m-1$. Then $m = 4$ or $6$.
\end{lemma}

\begin{proof}
First note that $m=3$ is not possible, as $u_3 = c_1$ which is assumed to be positive. Let $$A =
\begin{bmatrix}
c_1 & -c_2 & c_3\\
1 & 0 & 0\\
0 & 1 & 0
\end{bmatrix}.$$ Then for every $j > 2$, we have $$A
\begin{bmatrix}
u_{j-1}\\
u_{j-2}\\
u_{j-3}
\end{bmatrix} =
\begin{bmatrix}
u_j\\
u_{j-1}\\
u_{j-2}
\end{bmatrix}.$$ So in particular, $$A^m
\begin{bmatrix}
1\\
0\\
0
\end{bmatrix} = d
\begin{bmatrix}
1\\
0\\
0
\end{bmatrix},$$ where $d = c_3 u_{m-1}\in\ZZ_{>0}$. This reflects the fact that the existence of $m$ forces the sequence $(u_j)_{j\in\ZZ_{\geq 0}}$ to repeat up to multiplication by $d$ every $m$ steps. Note $$
\begin{bmatrix}
1\\
0\\
0
\end{bmatrix},
\begin{bmatrix}
u_3\\
1\\
0
\end{bmatrix},
\begin{bmatrix}
u_4\\
u_3\\
1
\end{bmatrix}$$ are linearly independent vectors, and as they are all eigenvectors associated to the eigenvalue $d$ of $A^m$, if we let $$B = \frac{1}{d^{\frac{1}{m}}}A,$$ then $B^m = \id_3$, the identity element of the multiplicative group of $3\times 3$ invertible real matrices, $\GL(3,\RR)$. By the minimality of $m$, $B$ has order $m$ as an element of $\GL(3,\RR)$. The eigenvalues of $A^m$ are exactly the $\alpha_1^m, \alpha_2^m, \alpha_3^m$, where $$(t - \alpha_1)(t - \alpha_2)(t - \alpha_3) = \det(t\id_3 - A) = t^3 - c_1 t^2 + c_2 t - c_3 = \rho$$ is the factorization over $\CC$ of the characteristic polynomial of $A$, which is also the characteristic polynomial of the sequence $(u_j)_{j\in\ZZ_{\geq 0}}$. Here the $\alpha_j$ must be distinct otherwise the positivity of either the $u_j$ or the $c_j$ is violated. Therefore, as $d$ is the only eigenvalue of $A^m$, $\rho$ must divide $t^m - d$.

If $d^{\frac{1}{m}}\not\in \ZZ$, then $\rho$ and $t^3 - d^{\frac{3}{m}}$ are irreducible elements of $\ZZ[t]$. Here $d^{\frac{3}{m}} = c_3 \in\ZZ_{>0}$. Then since $d^{\frac{1}{m}}$ is a root of $\rho$, we see $\rho$ must be equal to $t^3 - d^{\frac{3}{m}}$, which is impossible since we assume $c_1,c_2 > 0$.

This implies $d^{\frac{1}{m}}\in\ZZ$, and therefore $B$ is an element of order $m$ in $\GL(3, \QQ)$. Such matrices are well understood via elementary methods, and in particular the possible finite orders of elements of $\GL(3,\QQ)$ are $1,2,3,4,6$ \cite{koo1}. Thus the only possibilities for $m$ are $4,6$.
\end{proof}

This result is sharp in that both possibilities for $m$ can occur, for instance:

\begin{example}
\begin{itemize}
\item[]
\item[] $m = 4$: take $c_1 = 3, c_2 = 9$, and $c_3 = 27$. Then $$(u_j)_{j\in\ZZ_{\geq 0}} = (0, 0, 1, 3, 0, 0, 81, 243, 0, 0, 3561, 19683, 0, 0, \ldots),$$
\item[] $m = 6$: take $c_1 = 4, c_2 = 8$, and $c_3 = 8$. Then $$(u_j)_{j\in\ZZ_{\geq 0}} = (0, 0, 1, 4, 8, 8, 0, 0, 64, 256, 512, 512, 0, 0, \ldots).$$
\end{itemize}
\end{example}

In view of Theorem 4.1, we have restricted ourselves to the case when the ambient projective space has odd dimension to simplify the corresponding number-theoretic problem. As demonstrated by Lemma 5.2 it is a relatively simple matter to understand the possibilities for homogeneous order three integer linear recurrence sequences that have two consecutive zeros in addition to their initial conditions. However, by Remark 4.6, the entire codimension $3$ duality defect conjecture would follow if we could prove it for the case of even ambient space dimensions instead. The number-theoretic question in this case appears to be far more complicated, closer to the general problem of finding the zeros of linear recurrence sequences, a problem which has received much study but remains difficult \cite[Chapter 2]{everest1}.

Nevertheless, there are results in this direction, such as one due to Mignotte et al. \cite[Theorem 4]{mignotte1} which gives a computable bound for the indices of zeros of order three recurrence sequences subject to a certain nondegeneracy condition. It would be interesting to know if such a result in conjunction with the positivity constraints presented here for recurrence sequences representing positive defect subvarieties is enough to prove more cases of the duality defect conjecture.


\begin{thebibliography}{9}
\bibitem{bertram1} A. Bertram, L. Ein, and R. Lazarsfeld. \emph{Vanishing theorems, a theorem of Severi, and the equations defining projective varieties}. J. Amer. Math. Soc. 4 (1991) 587-601.
\bibitem{ein1} L. Ein. \emph{Varieties with small dual varieties, I}. Invent. Math. 86:1 (1986), 63–74.
\bibitem{everest1} G. Everest, A. van der Poorten, I. Shparlinski, and T. Ward. \emph{Recurrence sequences}.
Math. Surveys Monogr., vol. 104, Amer. Math. Soc.,
Providence, RI, 2003.
\bibitem{fulton1} W. Fulton. \emph{Intersection theory}. Vol. 2, Ergeb. Math. Grenzgeb., 3. Folge. Springer, Berlin, 1998.
\bibitem{fulton2} W. Fulton and R. Lazarsfeld. \emph{Connectivity and its applications in algebraic geometry}. Lecture Notes in Math. 862, 26-92 (1981).
\bibitem{hartshorne1} R. Hartshorne. \emph{Algebraic geometry}. Grad. Texts in Math. 52, Springer-Verlag, 1977.
\bibitem{hartshorne3} R. Hartshorne. \emph{Varieties of small codimension in projective space}. Bull. Amer. Math. Soc. 80 1974. 1017–1032.
\bibitem{holme1} A. Holme. \emph{A combinatorial proof of the duality defect conjecture in codimension 2}. Discrete Math., Volume 241, Issue 1, 2001, Pages 363-378.
\bibitem{holme2} A. Holme and M. Schneider. \emph{A computer aided approach to codimension 2 subvarieties of $\PP_n$, $n\geq 6$}. J. Reine Angew. Math. 357 (1985) 204–220.
\bibitem{holme3} A. Holme. \emph{The geometric and numerical properties of duality in projective algebraic geometry.} Manuscripta Math. 61 (1988) 145–161.
\bibitem{huh1} J. Huh. \emph{Milnor numbers of projective hypersurfaces and the chromatic polynomial of graphs}. J. Amer. Math. Soc. (2012).
\bibitem{koo1} R. Koo. \emph{A classification of matrices of finite order over $\mathbb{C}$, $\mathbb{R}$, and $\mathbb{Q}$}. Math. Mag., Math. Assoc. America, Vol. 76, No. 2 (Apr., 2003), pp. 143-148.
\bibitem{larsen1} M.E. Larsen. \emph{On the topology of complex projective manifolds}. Inventiones Math. 19, 251-260. 1973.
\bibitem{mignotte1} M. Mignotte, T.N. Shorey, and R. Tijdeman. \emph{The distance between terms of an algebraic recurrence sequence}. J. Reine Angew. Math. 349 (1984).
\bibitem{oaland1} T.E. Oaland. \emph{Dualitetsdefekt i kodimensjon 3.} Masters thesis, University of Bergen, 1998.
\bibitem{sage1} The Sage developers. \emph{SageMath, the Sage mathematics software system (version 7.6)}. http://www.sagemath.org, 2017.
\end{thebibliography}
\end{document}